\def\MR#1{}
\newtheorem{theorem}{Theorem}
\newtheorem{corollary}[theorem]{Corollary}
\newcommand{\beq}{\begin{equation}}
	\newcommand{\eeq}{\end{equation}}
\newcommand{\ben}{\begin{eqnarray}}
	\newcommand{\een}{\end{eqnarray}}
\newcommand{\beno}{\begin{eqnarray*}}
	\newcommand{\eeno}{\end{eqnarray*}}
\newcommand{\no}{\nonumber}
\numberwithin{equation}{section}
\newtheorem{Thm}{Theorem}[section]
\newtheorem{Lem}{lemma}[section]
\newtheorem{Rem}{Remark}[section]
\begin{document}
	\title[Saint-Venant Estimates and Liouville-Type Theorems]{\bf Saint-Venant Estimates and Liouville-Type Theorems for the Stationary MHD Equation in \(\mathbb{R}^3\)}
	
	    \author{Jing~Loong}
	    \address[Jing~Loong]{School of Mathematical Sciences, Dalian University of Technology, Dalian, 116024,  China}
	    \email{loongjing@mail.dlut.edu.cn}

		\author{Guoxu~Yang}
		\address[Guoxu~Yang]{School of Mathematical Sciences, Dalian University of Technology, Dalian, 116024,  China}
		\email{guoxu\_dlut@outlook.com}

	\date{\today}

	\begin{abstract} 
		In this paper, we investigate a Liouville-type theorem for the MHD equations using Saint-Venant type estimates. We show that \( (u, B) \) is a trivial solution if the growth of the \( L^s \) mean oscillation of the potential functions for both the velocity and magnetic fields are controlled. Our growth assumption is weaker than those previously known for similar results. The main idea is to refine the Saint-Venant type estimates using the Froullani integral. 
	\end{abstract}
	
	\maketitle
	
	{\small {\bf Keywords:} Liouville-type theorems; MHD system; Saint-Venant type estimates; Froullani integral}

	\section{Introduction}
	Consider the following general stationary MHD system in $ \mathbb{R}^3$: 
	\begin{equation}
		\label{eq:MHD}
		\left\{\begin{array}{l}
			-\kappa \Delta u+u \cdot \nabla u+\nabla p=B \cdot \nabla B, \\
			-\nu \Delta B+u \cdot \nabla B-B \cdot \nabla u=0, \\
			\operatorname{div} u=0, \quad \operatorname{div} B=0,
		\end{array}\right.
	\end{equation}
	where $u =(u_1,u_2,u_3)$ is the velocity field of the fluid flows, $B  = (B_1,B_2,B_3)$ is the magnetic field, and $p $ is the pressure of the flows. In addition, $\kappa>0$ and $\nu>0$ are given parameters denoting the fluid viscosity and the magnetic resistivity, respectively. Without loss of generality, let $\kappa=\nu=1$ now on.
	
		In this paper, we focus on the Liouville-type properties of the system \eqref{eq:MHD}, which is
	motivated by the development of Navier-Stokes equations. When $B\equiv0$, \eqref{eq:MHD} is reduced to the Navier-Stokes system in $ \mathbb{R}^3$:
	\begin{equation}
				\label{ns}
				\left\{\begin{array}{l}
						u \cdot \nabla u+\nabla p- \Delta u=0, \\
						\nabla \cdot u=0 ,
					\end{array}\right.
			\end{equation}
	and a very
	challenging open question is whether there exists a nontrivial solution when the Dirichlet integral $\int_{\mathbb{R}^3}|\nabla u|^2dx$ is finite, which
	dates back to Leray's celebrated paper \cite{L1933} and is explicitly written in Galdi's book (\cite{Galdi}, Remark X. 9.4; see also Tsai's book \cite{T2018}).
	The Liouville type problem without any other assumptions remains widely open. Galdi proved the above Liouville type theorem by assuming $u\in L^{\frac92}(\mathbb{R}^3)$ in \cite{Galdi}.
	Chae \cite{Chae2014} showed the condition $\triangle u\in L^{\frac65}(\mathbb{R}^3)$ is sufficient for the vanishing property of $u$ by exploring the maximum principle of the head pressure. Chae-Wolf \cite{ChaeWolf2016} gave an improvement of logarithmic form for Galdi's result
	by assuming that $\int_{\mathbb{R}^3} |u|^{\frac92}\{\ln(2+\frac{1}{|u|})\}^{-1}dx<\infty$.
	Seregin obtained the conditional criterion $u\in BMO^{-1}(\mathbb{R}^3)$ in \cite{Seregin2016}, where $u \in B M O^{-1}\left(\mathbb{R}^3\right)$ means $u=\operatorname{div} \boldsymbol{V}$ for some anti-symmetric tensor $\boldsymbol{V} \in B M O\left(\mathbb{R}^3\right)$. The $B M O^{-1}$ condition was later relaxed to growth conditions on a mean oscillation of $\boldsymbol{V}$ in \cites{S2018,CW2019,BY2024}. For more references on conditional Liouville properties or Liouville properties on special domains, we refer to \cites{bang2022liouville,CPZZ2020,Chae2021,KTW2021,LRZ2022, SW2019} and the references therein.
	Relatively speaking, the two-dimensional case is easier because the vorticity of the 2D Navier-Stokes equations satisfies a nice elliptic equation, to which the maximum principle applies. For example, Gilbarg-Weinberger \cite{GW1978} obtained Liouville type theorem provided the Dirichlet energy is finite. As a different type of Liouville property for the 2D Navier-Stokes equations, Koch-Nadirashvili-Seregin-Sverak \cite{KNSS2009} showed that any bounded solution is a trivial solution as a byproduct of their results on the non-stationary case (see also \cite{BFZ2013} for the unbounded velocity).
	
	However, for the MHD system, the situation is quite different. Due to the lack of maximum principle, there is not much progress in the study of MHD equation. For the 2D MHD equations, Liouville type theorems were proved by assuming the smallness of the norm of the magnetic field in \cite{WW2019}, and De Nitti et al. \cite{DHS2022} removed the smallness assumption. For the 3D MHD equations, Chae-Weng \cite{CW2016} proved that if the smooth solution satisfies D-condition ($\int_{\mathbb{R}^3}\left(|\nabla u|^2+|\nabla B|^2\right) d x<\infty$) and $u \in L^3(\mathbb{R}^3)$, then the solutions $(u, B)$ are identically zero. In \cite{S2019}, Schulz proved that if the smooth solution $(u, B)$ of the stationary MHD equations is in $L^6(\mathbb{R}^3)$ and $u,B \in BMO^{-1}(\mathbb{R}^3)$ then it is identically zero. Chae-Wolf \cite{CW2021} showed that $L^6$ mean oscillations of the potential function of the velocity and magnetic field have certain linear growth by using the technique of \cite{CW2019}. For more references, we refer to \cites{LP2021,LLN2020,FW2021,WY2024} and the references therein. 
	
	Here are our main results.
	\begin{Thm} \label{thm:main}
		Let ${u}$ and $B$ be smooth solutions of \eqref{eq:MHD}. Assume that there exists an $s \in(3,6]$ and smooth anti-symmetric potentials $\boldsymbol{V},\boldsymbol{W} \in C^{\infty}\left(\mathbb{R}^3 ; \mathbb{R}^{3 \times 3}\right)$ such that $\nabla \cdot \boldsymbol{V}=u$, $\nabla \cdot \boldsymbol{W}=B$, and
		\begin{equation}\begin{aligned} \label{eq:main condition}
				\left\|\boldsymbol{V}-(\boldsymbol{V})_{B_R}\right\|_{L^s(B_R)} + \left\|\boldsymbol{W}-(\boldsymbol{W})_{B_R}\right\|_{L^s(B_R)} \lesssim R^{\frac{s+6}{3 s}}(\log R)^{\beta},
		\end{aligned}\end{equation}
		where $\beta = \frac{s+6}{6 s}$, for all $R>2$.
		Then $u=B \equiv 0$.
	\end{Thm}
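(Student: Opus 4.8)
Write $E(R):=\int_{B_R}\bigl(|\nabla u|^2+|\nabla B|^2\bigr)\,dx$ and $\mathcal{O}(R):=\|\boldsymbol{V}-(\boldsymbol{V})_{B_R}\|_{L^s(B_R)}+\|\boldsymbol{W}-(\boldsymbol{W})_{B_R}\|_{L^s(B_R)}$, so that \eqref{eq:main condition} reads $\mathcal{O}(R)\lesssim R^{\frac{s+6}{3s}}(\log R)^{\beta}$; since $(u,B)$ is smooth, $E(R)$ is finite for every $R$ and nondecreasing. The plan is to obtain a Saint-Venant type inequality controlling $E$ at scale $r$ by $E$ at scale $2r$ (and by the energy in the annulus) with a coefficient that, once the growth bound on $\mathcal{O}$ is inserted, is only logarithmic in $r$; iterating such an inequality across dyadic scales and using an a priori polynomial bound on $E$ then forces $E\equiv 0$. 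Once $\nabla u\equiv\nabla B\equiv 0$, $u$ and $B$ are constant, and a nonzero constant is incompatible with \eqref{eq:main condition} (it would force $\mathcal{O}(R)\gtrsim R^{1+3/s}$, beyond the admissible rate); hence $u\equiv B\equiv 0$.

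\textbf{The localized energy estimate and the potential upgrade.} Fix $r>2$ and a cutoff $\phi\in C_c^\infty(B_{2r})$ with $\phi\equiv 1$ on $B_r$, $|\nabla^k\phi|\lesssim r^{-k}$. Testing the momentum equation of \eqref{eq:MHD} with $u\phi^2$ and the induction equation with $B\phi^2$, adding, and using $\operatorname{div}u=\operatorname{div}B=0$ repeatedly, the Lorentz-force term and the stretching term of the induction equation cancel up to $\int(u\cdot B)\,B\cdot\nabla\phi^2$, leaving
\[
E(r)\ \le\ C\!\!\int_{B_{2r}\setminus B_r}\!\!\Bigl(\bigl(|u|^2+|B|^2\bigr)|\nabla^2\phi^2|+\bigl(\tfrac{|u|^2+|B|^2}{2}+|p|\bigr)|u|\,|\nabla\phi^2|+|u\cdot B|\,|B|\,|\nabla\phi^2|\Bigr),
\]
the pressure being handled through $-\Delta p=\partial_i\partial_j(u_iu_j-B_iB_j)$ via $\|p-(p)_{B_{2r}}\|_{L^{3/2}(B_{2r})}\lesssim\|u\|_{L^3(B_{4r})}^2+\|B\|_{L^3(B_{4r})}^2$ plus a harmonic tail absorbed at the next scale. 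In every cubic term one writes one factor $u$ as $\operatorname{div}(\boldsymbol{V}-(\boldsymbol{V})_{B_{2r}})$ (and one factor $B$ as $\operatorname{div}(\boldsymbol{W}-(\boldsymbol{W})_{B_{2r}})$) and integrates by parts; the antisymmetry of $\boldsymbol{V},\boldsymbol{W}$ annihilates the terms in which both derivatives land on $\phi$, leaving integrands $\lesssim r^{-1}\mathcal{O}(2r)\,|u|\,|\nabla u|$ and analogues. Hölder with exponents $\bigl(s,\tfrac{2s}{s-2},2\bigr)$, the Sobolev embedding $\dot H^1\hookrightarrow L^6$ on $B_{2r}$, and the interpolation $\|\cdot\|_{L^{2s/(s-2)}}\lesssim\|\cdot\|_{L^2}^{1-3/s}\|\cdot\|_{L^6}^{3/s}$ — whose exponents are admissible precisely because $s\in(3,6]$ — then give, after substituting \eqref{eq:main condition} and noting that $\tfrac{3s}{s+6}\cdot\tfrac{s+6}{3s}=1$ makes the powers of $r$ cancel while $\tfrac{3s}{s+6}\cdot\beta=\tfrac12$, an estimate of the form
\[
E(r)\ \le\ C\,(\log r)^{1/2}\,E(2r)^{9/(s+6)}\ +\ \varepsilon(r),
\]
with $9/(s+6)<1$ (here $s>3$ is used) and $\varepsilon(r)\to0$; keeping the annular energy rather than bounding $\|\nabla u\|_{L^2(B_{2r}\setminus B_r)}$ by $E(2r)^{1/2}$ yields the sharper companion $E(r)\le C(\log r)^{1/2}E(2r)^{\theta}\bigl(E(2r)-E(r)\bigr)^{1/2}+\varepsilon(r)$ with $\theta=\tfrac{12-s}{2(s+6)}$ (consistent with the first bound since $\theta+\tfrac12=\tfrac{9}{s+6}$). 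A cruder version of the computation gives the a priori polynomial bound $E(R)\lesssim R^{N}$.

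\textbf{Iteration and conclusion.} The a priori bound $E(R)\lesssim R^N$ together with the self-improving inequality (exponent strictly below $1$), iterated over dyadic scales, forces $E$ to grow at most like a power of $\log R$. A further scale-by-scale argument based on the sharp companion inequality — this is the step that genuinely exploits the Froullani refinement and the critical value of $\beta$ — then improves this to $\lim_{R\to\infty}E(R)=0$, so that $E\equiv 0$ by monotonicity, and the proof is completed as in the Strategy.

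\textbf{Where the difficulty lies.} The energy identity and the Gagliardo--Nirenberg interpolation are routine; the point is that the estimate must be pushed to a genuinely critical regime, which is what allows the extra factor $(\log R)^{\beta}$ in \eqref{eq:main condition}. The exponent $\beta=\tfrac{s+6}{6s}$ is chosen exactly so that $\tfrac{3s}{s+6}\beta=\tfrac12$, making the logarithmic power in the Saint-Venant inequality equal to $\tfrac12$, which in the iteration corresponds to the borderline (divergent) harmonic series; any additional loss of logarithmic power would turn this into a convergent series and break the argument. Obtaining the estimate with this sharp logarithmic power — rather than a lossy $(\log r)^{1/2+\delta}$ produced by a single cutoff — forces one to run the local energy estimate through a one-parameter family of radii $\rho\in(r,2r)$ and average against $d\rho/\rho$, at which stage the cross-scale remainders organize into a Froullani integral $\int\frac{f(a\rho)-f(b\rho)}{\rho}\,d\rho$ whose telescoping prevents logarithmic accumulation. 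Verifying that the pressure term and the lower-order remainders $\varepsilon(r)$ cooperate with this averaged framework, and that the final iteration indeed closes, is the crux of the proof.
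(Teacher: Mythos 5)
Your overall architecture matches the paper's: a localized Saint--Venant energy inequality in which the potentials absorb one factor of $u$ or $B$ in each cubic term, with the bookkeeping $\frac{3s}{s+6}\cdot\frac{s+6}{3s}=1$ and $\frac{3s}{s+6}\beta=\frac12$ correctly identified as critical; a Caccioppoli-type iteration giving the polylogarithmic growth $E(R)\lesssim(\log R)^{M}$; and a refined dyadic/differential inequality to conclude. Your companion inequality $E(r)\le C(\log r)^{1/2}E(2r)^{\theta}\bigl(E(2r)-E(r)\bigr)^{1/2}+\varepsilon(r)$ with $\theta=\frac{12-s}{2(s+6)}$ is exactly the discrete form of the paper's \eqref{eq:est of gr}. (One routine divergence: you treat the pressure via $-\Delta p=\partial_i\partial_j(u_iu_j-B_iB_j)$ plus a harmonic tail, whereas the paper uses a Bogovskii corrector $w$ with $\operatorname{div}w=u\cdot\nabla\eta_2$, which kills the pressure term identically and avoids having to control both the harmonic part of $p$ and $\|u\|_{L^3}$, which is not directly available under the hypotheses; your substitution needs more care than you give it, but it is not the main issue.)

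The genuine gap is the final step. When $E(\infty)<\infty$ your companion inequality does close: if $E(\infty)=L\in(0,\infty)$ then $E(2r)-E(r)\gtrsim 1/\log r$ along dyadic scales and the divergent harmonic series contradicts $\sum_k\bigl(E(2^{k+1})-E(2^k)\bigr)<\infty$; this is the paper's Case I. But when $E(\infty)=\infty$ --- which the bound $E(R)\lesssim(\log R)^{M}$ does not exclude --- squaring your inequality only yields $E(2r)-E(r)\gtrsim\frac{E(r)^{2-2\theta}}{\log r}\bigl(\tfrac{E(r)}{E(2r)}\bigr)^{2\theta}$, and since $\theta<\tfrac12$ for $s\in(3,6]$ the ratio factor cannot be discarded: you have no lower bound on $E(r)/E(2r)$. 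You assert that ``a further scale-by-scale argument based on the sharp companion inequality \dots improves this to $\lim E(R)=0$,'' but that is precisely the step that must be supplied. The paper's device is the pointwise bound $x^{a}\ge 1+\log x$ for $x\in(0,1]$, which converts the left-hand side of the integrated inequality into $\log\log r+\int^{r}\frac{\log g(R)-\log g(3R)}{R\log R}\,\mathrm{d}R$; this difference integral is of Froullani type and is bounded below exactly because $\log g(R)\lesssim\log\log R$ (the output of the growth lemma), while the right-hand side $\int g'/g^{3s/(s+6)}$ converges because $\frac{3s}{s+6}>1$, i.e.\ $s>3$. So the Froullani integral enters in the final contradiction argument applied to $\log g(R)-\log g(3R)$, not --- as you suggest --- in an averaging of the cutoff radius inside the energy estimate. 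Without this, or an equivalent mechanism controlling the ratio $E(r)/E(2r)$, your argument does not reach the conclusion.
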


	\begin{corollary} \label{cor1}
		Let ${u}$ be a smooth solution of \eqref{ns}. Assume that there exists an $s \in(3,6]$ and smooth anti-symmetric potential $\boldsymbol{V}\in C^{\infty}\left(\mathbb{R}^3 ; \mathbb{R}^{3 \times 3}\right)$ such that $\nabla \cdot \boldsymbol{V}=u$, and
		\begin{equation*}\begin{aligned} \label{eq:con}
				\left\|\boldsymbol{V}-(\boldsymbol{V})_{B_R}\right\|_{L^s(B_R)} \lesssim R^{\frac{s+6}{3 s}}(\log R)^{\frac{s+6}{6 s}},
		\end{aligned}\end{equation*}
		for all $R>2$.
		Then $u \equiv 0$.
	\end{corollary}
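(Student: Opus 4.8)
The plan is to obtain Corollary~\ref{cor1} directly from Theorem~\ref{thm:main} by viewing \eqref{ns} as the $B\equiv 0$ reduction of \eqref{eq:MHD}. Concretely, given a smooth solution $u$ of \eqref{ns}, I would set $B\equiv 0$; then the induction equation $-\Delta B+u\cdot\nabla B-B\cdot\nabla u=0$ and the constraint $\operatorname{div}B=0$ hold automatically, while the momentum equation $-\Delta u+u\cdot\nabla u+\nabla p=B\cdot\nabla B$ collapses to exactly \eqref{ns}. Hence $(u,B)=(u,0)$ is a smooth solution of \eqref{eq:MHD}.

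Next, I would supply the magnetic potential by taking $\boldsymbol{W}\equiv 0\in C^{\infty}(\mathbb{R}^3;\mathbb{R}^{3\times 3})$: this is trivially anti-symmetric and satisfies $\nabla\cdot\boldsymbol{W}=0=B$, and moreover $\boldsymbol{W}-(\boldsymbol{W})_{B_R}\equiv 0$, so $\|\boldsymbol{W}-(\boldsymbol{W})_{B_R}\|_{L^s(B_R)}=0$ for every $R>2$. Therefore the bilateral growth hypothesis \eqref{eq:main condition} reduces to $\|\boldsymbol{V}-(\boldsymbol{V})_{B_R}\|_{L^s(B_R)}\lesssim R^{\frac{s+6}{3s}}(\log R)^{\beta}$ with $\beta=\frac{s+6}{6s}$, which is precisely the assumption of Corollary~\ref{cor1}. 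Applying Theorem~\ref{thm:main} with this data gives $u=B\equiv 0$, and in particular $u\equiv 0$, as claimed.

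Since the corollary is a formal specialization, there is no genuine obstacle in its proof; all the analytic work — the Saint-Venant type energy estimates, the Caccioppoli-type iteration, and the Froullani-integral refinement that produces the sharp logarithmic exponent $\beta=\frac{s+6}{6s}$ — is carried out in the proof of Theorem~\ref{thm:main}. The only point worth flagging is that, exactly as in Theorem~\ref{thm:main}, the existence of the anti-symmetric potential $\boldsymbol{V}$ with the prescribed oscillation growth is a hypothesis rather than something established here: one does not construct it, one merely feeds it into the MHD result together with the trivial choice $\boldsymbol{W}\equiv 0$.
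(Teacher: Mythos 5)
Your proposal is correct and is exactly the intended derivation: the paper gives no separate proof of Corollary~\ref{cor1}, treating it as the immediate specialization of Theorem~\ref{thm:main} obtained by taking $B\equiv 0$ and $\boldsymbol{W}\equiv 0$, so that the two-field hypothesis \eqref{eq:main condition} collapses to the single condition on $\boldsymbol{V}$. Nothing is missing.
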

	\begin{Rem}
		We emphasize that for any divergence-free vector field \( u \), there always exists an anti-symmetric potential \( \boldsymbol{V} \) such that \( u = \operatorname{div} \boldsymbol{V} \). See, e.g., \cites{Seregin2016, CW2019, CW2021}.
	\end{Rem}
	\begin{Rem}
		In Theorem \ref{thm:main}, $s \in(3,6]$, but it can be generalized to $s \in(3, \infty)$ by replacing \eqref {eq:main condition} with
		$$
		\left\|\boldsymbol{V}-(\boldsymbol{V})_{B_R}\right\|_{L^s(B_R)}  + \left\|\boldsymbol{W}-(\boldsymbol{W})_{B_R}\right\|_{L^s(B_R)}\lesssim R^{\min \left\{\frac{s+6}{3 s}, \frac{s+18}{6 s}\right\}}(\log R)^{\min \left\{\frac{s+6}{6 s}, \frac{1}{3}\right\}},
		$$
		for all $R>2$. Indeed, if $s>6$, by Hölder inequality,
		\begin{equation*}\begin{aligned}
				&\quad\left\|\boldsymbol{V}-(\boldsymbol{V})_{B_R}\right\|_{L^6(B_R)}+ \left\|\boldsymbol{W}-(\boldsymbol{W})_{B_R}\right\|_{L^6(B_R)} \\ &\lesssim R^{\frac{s-6}{2 s}} \left(\left\|\boldsymbol{V}-(\boldsymbol{V})_{B_R}\right\|_{L^s(B_R)}  + \left\|\boldsymbol{W}-(\boldsymbol{W})_{B_R}\right\|_{L^s(B_R)}\right) \lesssim R^{\frac{2}{3}}(\log R)^{\frac{1}{3}},
		\end{aligned}\end{equation*}
	    which implies \eqref{eq:main condition} when $s=6$.
	\end{Rem}
	\begin{Rem}
		For \( s = 3 \), we can not control the local Dirichlet energy (see \eqref{eq:s>3} for details). However, if we assume \( \int_{\mathbb{R}^3} |\nabla u|^2 + |\nabla B|^2~\mathrm{d}x < \infty \), we could set \( s = 3 \). In fact, we only consider Case I in the proof of Theorem \ref{thm:main}, and the proof by contradiction still holds due to we do not need the growth estimates for the local Dirichlet energy.
	\end{Rem}
	\begin{Rem}
		The $\log R$ factor in \eqref{eq:main condition} and \eqref{eq:con} can be  relaxed to $\prod_{k=1}^m l^k(R)$ for any $m \in \mathbb{N}$, where $l^k$ is the composition of $|\log |$ function $k$ times. Indeed, the analytical method of the Froullani integral remains valid for $\prod_{k=1}^m l^k(R)$.
	\end{Rem}
	\begin{Rem}
		Theorem \ref{thm:main} relaxes the assumption in Theorem 1.1 of \cite{CW2021} from \( R^{\frac{s+6}{3s}} \) to \( R^{\frac{s+6}{3s}} (\log R)^{\frac{s+6}{6s}} \). Also, Corollary \ref{cor1} relaxes the assumption in Theorem 1.1 of \cite{BY2024} from \( R^{\frac{s+6}{3s}} (\log R)^{\frac{s-3}{3s}} \) to \( R^{\frac{s+6}{3s}} (\log R)^{\frac{s+6}{6s}} \). We improved the coefficient of \( \log R \) by refining the Saint-Venant type estimates using the Froullani integral. See \eqref{eq:main int}–\eqref{eq:Ib} for more details. We also note that improving the coefficient of \( R \) is difficult due to limitations of the method.
	\end{Rem}
	This paper is organized as follows: some notations and some necessary lemmas are presented in Sect.\ref{sec2}; The Liouville type theorem on the MHD system  is obtained in Sect.\ref{sec3}, where we give the detailed proof of Theorem \ref{thm:main}.

	\section{Preliminaries} \label{sec2}
	First we introduce some notations. Denote by $\nabla^\gamma:=\partial_{x_1}^{\gamma_1} \partial_{x_2}^{\gamma_2} \partial_{x_3}^{\gamma_3}$, where  $\gamma=\left(\gamma_1, \gamma_2, \gamma_3\right)$, $\gamma_1, \gamma_2, \gamma_3 \in \mathbb{N} \cup\{0\}$, $\partial_i = \frac{\partial}{\partial x_i}$ and $|\gamma|=\gamma_1+\gamma_2+\gamma_3$. We denote $L^p(\Omega)$ by the usual Lebesgue space with the norm
	$$
	\|f\|_{L^p(\Omega)}:= \begin{cases}\left(\int_{\Omega}|f(x)|^p d x\right)^{1 / p}, & 1 \leq p<\infty, \\ \underset{x \in \Omega}{\operatorname{ess\, sup}}|f(x)|, & p=\infty,\end{cases}
	$$
	where $\Omega \subset \mathbb{R}^3, 1 \leq p \leq \infty$. $W^{k, p}(\Omega)$  and $\dot{W}^{k, p}(\Omega)$ are defined as follows:
	$$
	\begin{aligned}
		\|f\|_{W^{k, p}(\Omega)} & :=\sum_{0 \leq|\gamma| \leq k}\left\|\nabla^\gamma f\right\|_{L^p(\Omega)}, \\
		\|f\|_{\dot{W}^{k, p}(\Omega)} & :=\sum_{|\gamma|=k}\left\|\nabla^\gamma f\right\|_{L^p(\Omega)},
	\end{aligned}
	$$
	respectively. Let $C_c^{\infty}(\Omega)$ denote the space of infinitely differentiable functions with compact support in $\Omega$. We denote by $W_0^{k, p}(\Omega) $ the closure of $C_c^{\infty}(\Omega)$ in $W^{k, p}(\Omega)$. Throughout this paper, we denote $\overline{\boldsymbol{V}}:=\boldsymbol{V}-(\boldsymbol{V})_{B_R}$, $\overline{\boldsymbol{W}}:=\boldsymbol{W}-(\boldsymbol{W})_{B_R}$ and denote $A \leq C B$ by $A \lesssim B$ for convenience. We denote that $B_R$ is the ball in $\mathbb{R}^3$ of radius $R$ centered at the origin, $B_R^{+}=B_R \cap \mathbb{R}_{+}^3$. We denote by
	$$
	\left\|\left(f,g\right)\right\|^2_{L^2(\Omega)} := \left\|f\right\|^2_{L^2(\Omega)} + \left\|g\right\|^2_{L^2(\Omega)} .
	$$
	Let $0<\rho<R$, we denote $\eta_1$ by a cut-off function such that 
	$$
	\eta_1(x)=\eta_1(|x|)=\left\{\begin{array}{lll}
		1, & \text { if } & |x|< \frac{R+\rho}{2} ; \\
		0, & \text { if } & |x|> R ,
	\end{array}\right.
	$$
	and $0 \leq \eta_1(x) \leq 1$ for any $\frac{R+\rho}{2}  \leq|x| \leq R$. We denote $\eta_2$ by a cut-off function such that 
	$$
	\eta_2(x)=\eta_2(|x|)=\left\{\begin{array}{lll}
		1, & \text { if } & |x|< \rho ; \\
		0, & \text { if } & |x|>  \frac{R+\rho}{2},
	\end{array}\right.
	$$
	and $0 \leq \eta_2(x) \leq 1$ for any $\rho  \leq|x| \leq \frac{R+\rho}{2}$. We denote $\eta$ by a cut-off function given by
	$$
	\eta(x)= \begin{cases}1 & \text { if } x<1, \\ -x+2 & \text { if } 1 \leq x \leq 2, \\ 0 & \text { if } x>2,\end{cases}
	$$
	and denote $\varphi_R(x):=\eta(\frac{|x|}{R} )$. It obviously holds
	\begin{equation*}\begin{aligned}
			|\nabla \eta_1|  +  |\nabla \eta_2|  \lesssim  \left(R-\rho\right)^{-1}.
	\end{aligned}\end{equation*}

	Next, we apply the following two lemmas to demonstrate that \( \left\| u \eta_1 \right\|_{L^2(B_R)} \) and \( \left\| u \eta_1 \right\|_{L^{\frac{4 s}{s+2}}(B_R)} \) are controlled by \( \|\overline{\boldsymbol{V}}\|_{L^s(B_R)} \) and \( \|\nabla u\|_{L^2(B_R)} \), with the same result holding for $B$.
	
	\begin{Lem}[\cite{BY2024}, Lemma 2.1] \label{lem:est of 2-norm}
		Assume $s \geq 2$. It holds
		\begin{equation}\begin{aligned}
				\left\|{u} \eta_1\right\|_{L^2(B_R)} &\lesssim R^{\frac{3(s-2)}{4 s}}\|\overline{\boldsymbol{V}}\|_{L^s(B_R)}^{\frac{1}{2}}\|\nabla {u}\|_{L^2(B_R)}^{\frac{1}{2}}+R^{\frac{3(s-2)}{2 s}}(R-\rho)^{-1}\|\overline{\boldsymbol{V}}\|_{L^s(B_R)}, \\
					\left\|B \eta_1\right\|_{L^2(B_R)} &\lesssim R^{\frac{3(s-2)}{4 s}}\|\overline{\boldsymbol{W}}\|_{L^s(B_R)}^{\frac{1}{2}}\|\nabla B\|_{L^2(B_R)}^{\frac{1}{2}}+R^{\frac{3(s-2)}{2 s}}(R-\rho)^{-1}\|\overline{\boldsymbol{W}}\|_{L^s(B_R)}.
		\end{aligned}\end{equation}
	\end{Lem}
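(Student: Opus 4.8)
The plan is to use the potential representation $u=\nabla\cdot\boldsymbol{V}$ to trade one derivative: an integration by parts rewrites $\int_{B_R}|u|^2\eta_1^2$ as an integral that is linear in the potential and in $\nabla u$, after which the claimed bound follows from Hölder's inequality together with an absorption argument. Since $\nabla\cdot$ annihilates constant matrices, we also have $u=\nabla\cdot\overline{\boldsymbol{V}}$, so we may work with $\overline{\boldsymbol{V}}=\boldsymbol{V}-(\boldsymbol{V})_{B_R}$ from the outset; only the componentwise identity $u_i=\partial_j\overline{V}_{ij}$ (summation over repeated indices) will be used, not the anti-symmetry of $\boldsymbol{V}$. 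I prove the estimate for $u$; the one for $B$ is obtained verbatim with $\boldsymbol{W},\overline{\boldsymbol{W}}$ in place of $\boldsymbol{V},\overline{\boldsymbol{V}}$.

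First I would record the identity
\[
\int_{B_R}|u|^2\eta_1^2\,dx=\int_{B_R}(\partial_j\overline{V}_{ij})\,u_i\,\eta_1^2\,dx=-\int_{B_R}\overline{V}_{ij}\,(\partial_j u_i)\,\eta_1^2\,dx-2\int_{B_R}\overline{V}_{ij}\,u_i\,\eta_1\,\partial_j\eta_1\,dx,
\]
in which no boundary term appears because $\eta_1$ (hence $\eta_1^2$) vanishes near $\partial B_R$, while $u$ and $\boldsymbol{V}$ are smooth. Call the two integrals on the right $\mathrm{I}$ and $\mathrm{II}$. For $\mathrm{I}$, Hölder's inequality with the conjugate exponents $s$, $2$, and $\tfrac{2s}{s-2}$ gives
\[
|\mathrm{I}|\le\|\overline{\boldsymbol{V}}\|_{L^s(B_R)}\,\|\nabla u\|_{L^2(B_R)}\,\|\eta_1^2\|_{L^{\frac{2s}{s-2}}(B_R)}\lesssim R^{\frac{3(s-2)}{2s}}\,\|\overline{\boldsymbol{V}}\|_{L^s(B_R)}\,\|\nabla u\|_{L^2(B_R)},
\]
where I use $0\le\eta_1\le1$ and $|B_R|\lesssim R^3$ (for $s=2$ the third exponent is $\infty$ and the power of $R$ is $R^0$). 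For $\mathrm{II}$ I would keep exactly one factor $\eta_1$ attached to $u$, use $|\nabla\eta_1|\lesssim(R-\rho)^{-1}$, and apply the same three-exponent Hölder inequality to obtain
\[
|\mathrm{II}|\lesssim\|\overline{\boldsymbol{V}}\|_{L^s(B_R)}\,\|u\eta_1\|_{L^2(B_R)}\,\|\nabla\eta_1\|_{L^{\frac{2s}{s-2}}(B_R)}\lesssim(R-\rho)^{-1}R^{\frac{3(s-2)}{2s}}\,\|\overline{\boldsymbol{V}}\|_{L^s(B_R)}\,\|u\eta_1\|_{L^2(B_R)}.
\]

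Finally, Young's inequality applied to the bound for $\mathrm{II}$ produces a term $\tfrac12\|u\eta_1\|_{L^2(B_R)}^2$ that is absorbed into the left-hand side, leaving
\[
\|u\eta_1\|_{L^2(B_R)}^2\lesssim R^{\frac{3(s-2)}{2s}}\|\overline{\boldsymbol{V}}\|_{L^s(B_R)}\|\nabla u\|_{L^2(B_R)}+(R-\rho)^{-2}R^{\frac{3(s-2)}{s}}\|\overline{\boldsymbol{V}}\|_{L^s(B_R)}^2,
\]
and taking square roots with $\sqrt{a+b}\le\sqrt a+\sqrt b$ yields precisely the asserted inequality. I do not expect a genuine obstacle here; the scheme is the standard ``integrate by parts, apply Hölder, absorb''. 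The only points that need a little care are the vanishing of the boundary term (guaranteed by the support and regularity of $\eta_1$ and the smoothness of $u,\boldsymbol V$), ensuring that in $\mathrm{II}$ the absorbed quantity is the weighted norm $\|u\eta_1\|_{L^2(B_R)}$ rather than an uncontrolled $\|u\|_{L^2}$ over the annulus where $\nabla\eta_1\neq0$, and the exponent bookkeeping so that after the square root the powers of $R$ come out as $R^{3(s-2)/(4s)}$ and $R^{3(s-2)/(2s)}$.
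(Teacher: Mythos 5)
Your argument is correct and is exactly the standard proof of this estimate (the paper itself quotes the lemma from \cite{BY2024} without proof): write $u_i=\partial_j\overline{V}_{ij}$, integrate by parts against $u_i\eta_1^2$, apply Hölder with exponents $s,2,\tfrac{2s}{s-2}$, and absorb the $\|u\eta_1\|_{L^2}$ factor by Young's inequality. The exponent bookkeeping checks out, including the degenerate case $s=2$, so there is nothing to fix.
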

	\begin{Lem}[\cite{BY2024}, Lemma 2.2 ] \label{lem: est of 4s}
		Assume $s \geq 2$. It holds
		\begin{equation}\begin{aligned}
				\left\|{u} \eta_1\right\|_{L^{\frac{4 s}{s+2}}(B_R)} &\lesssim\|\overline{\boldsymbol{V}}\|_{L^s(B_R)}^{\frac{1}{2}}\|\nabla {u}\|_{L^2(B_R)}^{\frac{1}{2}}+(R-\rho)^{-1} R^{\frac{3 s-6}{4 s}}\|\overline{\boldsymbol{V}}\|_{L^s(B_R)},\\
				\left\|{B} \eta_1\right\|_{L^{\frac{4 s}{s+2}}(B_R)} &\lesssim\|\overline{\boldsymbol{W}}\|_{L^s(B_R)}^{\frac{1}{2}}\|\nabla {B}\|_{L^2(B_R)}^{\frac{1}{2}}+(R-\rho)^{-1} R^{\frac{3 s-6}{4 s}}\|\overline{\boldsymbol{W}}\|_{L^s(B_R)}.
		\end{aligned}\end{equation}
	\end{Lem}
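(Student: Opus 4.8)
\emph{Proof strategy.} The estimate is an $L^{q}$ duality computation built on the identity $u=\operatorname{div}\boldsymbol V$, i.e. $u_i=\partial_jV_{ij}$, and it runs in parallel with Lemma~\ref{lem:est of 2-norm}; it suffices to prove the bound for $u$ in terms of $\overline{\boldsymbol V}$, the magnetic bound being identical. Set $q:=\tfrac{4s}{s+2}$, so that $q\ge2$ exactly when $s\ge2$. I would begin with
\[
\|u\eta_1\|_{L^q(B_R)}^q=\int_{B_R}|u\eta_1|^{q-2}(u\eta_1)_i\,(u\eta_1)_i\,dx=\int_{B_R}|u\eta_1|^{q-2}(u\eta_1)_i\,(\partial_jV_{ij})\,\eta_1\,dx ,
\]
integrate by parts in $x_j$ (the bracketed factor $|u\eta_1|^{q-2}(u\eta_1)_i\eta_1$ vanishes on $\partial B_R$, so there is no boundary term), and then replace $V_{ij}$ by $\overline V_{ij}=V_{ij}-(V_{ij})_{B_R}$, which is legitimate since what remains is paired against an exact $x_j$–derivative of a function supported in $B_R$. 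Distributing the derivative and using $|\nabla\eta_1|\lesssim(R-\rho)^{-1}$, $0\le\eta_1\le1$, $|u|\,\eta_1=|u\eta_1|$ (regularising $|u\eta_1|$ by $(|u\eta_1|^2+\varepsilon)^{1/2}$ if one wishes to be careful about its zeros) yields
\[
\|u\eta_1\|_{L^q(B_R)}^q\;\lesssim\;\int_{B_R}|u\eta_1|^{q-2}\,|\nabla u|\,|\overline{\boldsymbol V}|\,dx\;+\;(R-\rho)^{-1}\!\int_{A}|u\eta_1|^{q-1}\,|\overline{\boldsymbol V}|\,dx ,
\]
where $A$ is the shell on which $\nabla\eta_1\neq0$.

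The plan is then to estimate each term by Hölder. For the first, the exponents $\bigl(\tfrac{q}{q-2},2,s\bigr)$ are admissible precisely because $\tfrac{q-2}{q}+\tfrac12+\tfrac1s=1$ — and this identity is exactly what forces the choice $q=\tfrac{4s}{s+2}$ — so that term is bounded by $\|u\eta_1\|_{L^q(B_R)}^{q-2}\,\|\nabla u\|_{L^2(B_R)}\,\|\overline{\boldsymbol V}\|_{L^s(B_R)}$. For the second, Hölder with exponents $\bigl(\tfrac{q}{q-1},s,r\bigr)$, where $\tfrac1r=\tfrac1q-\tfrac1s=\tfrac{s-2}{4s}$, together with $|A|^{1/r}\lesssim R^{3/r}=R^{\frac{3s-6}{4s}}$, bounds it by $R^{\frac{3s-6}{4s}}\,\|u\eta_1\|_{L^q(B_R)}^{q-1}\,\|\overline{\boldsymbol V}\|_{L^s(B_R)}$. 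Dividing through by $\|u\eta_1\|_{L^q(B_R)}^{q-2}$ (we may assume it is positive, else there is nothing to prove) gives
\[
\|u\eta_1\|_{L^q(B_R)}^2\;\lesssim\;\|\nabla u\|_{L^2(B_R)}\|\overline{\boldsymbol V}\|_{L^s(B_R)}\;+\;(R-\rho)^{-1}R^{\frac{3s-6}{4s}}\,\|u\eta_1\|_{L^q(B_R)}\,\|\overline{\boldsymbol V}\|_{L^s(B_R)} ,
\]
and a final application of Young's inequality to the last term, absorbing a small multiple of $\|u\eta_1\|_{L^q(B_R)}^2$ into the left-hand side and taking square roots, produces exactly the stated inequality.

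There is no real obstacle here: the lemma is elementary once one pairs the $L^q$ norm with itself and substitutes $u=\operatorname{div}\boldsymbol V$. The only genuine point to verify is that the Hölder bookkeeping closes, and this is what pins down $q=\tfrac{4s}{s+2}$ and, through the leftover exponent $r$ that controls the volume of the cut-off shell, the weight $R^{\frac{3s-6}{4s}}$. The hypothesis $s\ge2$ is used solely to guarantee $q\ge2$, so that $|u\eta_1|^{q-2}$ is a bona fide (non-singular) weight and the division step is valid; at $s=2$ the statement degenerates into the $L^2$ estimate of Lemma~\ref{lem:est of 2-norm}. The magnetic case is word-for-word the same with $(\boldsymbol V,u)$ replaced by $(\boldsymbol W,B)$.
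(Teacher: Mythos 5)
Your proof is correct: the pairing of $\|u\eta_1\|_{L^q}^q$ with $u=\operatorname{div}\boldsymbol V$, the integration by parts with the mean subtracted afterwards, and the two Hölder applications (whose exponent bookkeeping indeed forces $q=\tfrac{4s}{s+2}$ and produces the factor $R^{\frac{3s-6}{4s}}$ from the volume of the cut-off shell) close up exactly as you describe, and the final division plus Young step is valid since $u$ is smooth so $\|u\eta_1\|_{L^q(B_R)}$ is finite. The paper only cites this lemma from the reference rather than proving it, and your argument is the same standard Chae--Wolf/Seregin-type estimate used there.
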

	
	We recall the Bogovskii map (see \cite{Galdi}, Lemma III.3.1 and \cite{T2018}, §2.8). For a domain $\Omega \subset \mathbb{R}^3$, denote
	$$
	L_0^q(\Omega)=\left\{f \in L^q(\Omega): \int_\Omega f~\mathrm{d}x=0\right\}.
	$$
	Then we use the following lemma to deal with the pressure.
	\begin{Lem}[\cite{T2021}, Lemma 3] \label{lem:deal with the pressure}
		Let $R>0$, $1<L<\infty$ and $A=B_{L R} \backslash \bar{B}_R$ or $A=B_{L R}^{+} \backslash \bar{B}_R^{+}$ be an annulus or a half-annulus in $\mathbb{R}^3$. There is a linear Bogovskii map $\operatorname{Bog}$ that maps a scalar function $f \in L_0^q(A), 1<q<\infty$, to a vector field $v=\operatorname{Bog} f \in W_0^{1, q}(A)$ and
		$$
		\operatorname{div} v=f, \quad\|\nabla v\|_{L^q(A)} \leq \frac{C_q}{(L-1) L^{1-1 / q}}\|f\|_{L^q(A)}.
		$$
		The constant $C_q$ is independent of $L$ and $R$.
	\end{Lem}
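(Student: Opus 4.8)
The estimate is invariant under the dilation $x\mapsto Rx$, so the plan is to first normalize the inner radius to $1$ and then construct the operator by hand on the model (half-)annulus, keeping track of the dependence on $L$. Suppose one has produced a linear right inverse $\operatorname{Bog}_{1}$ of the divergence on $A_{1}:=B_{L}\setminus\bar B_{1}$ (respectively $B_{L}^{+}\setminus\bar B_{1}^{+}$) with
\[
\|\nabla\operatorname{Bog}_{1}g\|_{L^{q}(A_{1})}\le\kappa(L,q)\,\|g\|_{L^{q}(A_{1})}\qquad\text{for all }g\in L_{0}^{q}(A_{1}).
\]
Given $f\in L_{0}^{q}(A)$ with $A=B_{LR}\setminus\bar B_{R}$, set $g(y):=f(Ry)$ (so $g\in L_{0}^{q}(A_{1})$), let $w:=\operatorname{Bog}_{1}g$, and put $\operatorname{Bog}f(x):=R\,w(x/R)$. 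Since $\partial_{x}\bigl(Rw(x/R)\bigr)=(\partial_{y}w)(x/R)$, one has $\operatorname{div}(\operatorname{Bog}f)=f$ and $\operatorname{Bog}f\in W_{0}^{1,q}(A)$, and the change of variables $x=Ry$ makes the Jacobian factors cancel:
\[
\|\nabla(\operatorname{Bog}f)\|_{L^{q}(A)}=R^{3/q}\,\|\nabla w\|_{L^{q}(A_{1})}\le R^{3/q}\,\kappa(L,q)\,\|g\|_{L^{q}(A_{1})}=\kappa(L,q)\,\|f\|_{L^{q}(A)}.
\]
Thus $C_{q}$ is $R$-independent, and the whole problem reduces to producing $\operatorname{Bog}_{1}$ with $\kappa(L,q)$ of the asserted size, i.e.\ $C_{q}(L-1)^{-1}$ times a fixed power of $L$ (in particular bounded whenever $L$ stays in a compact subset of $(1,\infty)$), with $C_{q}$ locally bounded in $q\in(1,\infty)$.

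On the model domain I would run the classical Bogovskii machinery. Recall (e.g.\ \cite{Galdi}, Lemma III.3.1) that a bounded Lipschitz domain $\Omega$ star-shaped with respect to a ball $B_{\rho}(x_{0})$ carries a linear right inverse of the divergence on $L_{0}^{q}(\Omega)$ with $\dot{W}^{1,q}_{0}$-operator norm at most $C(q)\bigl(\operatorname{diam}\Omega/\rho\bigr)^{3}\bigl(1+\operatorname{diam}\Omega/\rho\bigr)$, and that this extends --- with a constant controlled by the number of pieces, their individual constants, and the sizes of the overlaps --- to any domain that is a finite connected chain of such star-shaped pieces, by iterating the two-piece case in which one transports the excess mass $\int_{\Omega_{1}}g$ into $\Omega_{2}$ through a fixed bump supported in $\Omega_{1}\cap\Omega_{2}$. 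The half-annulus is treated the same way, its pieces being half-balls or half-star-shaped sets on which a Bogovskii operator is likewise bounded. When $L$ stays in a fixed compact subinterval of $(1,\infty)$, $A_{1}$ is a fixed finite such chain (boundedly many ``curved boxes'' in the angular variable, each of bounded eccentricity), so $\kappa(L,q)\le C_{q}$ there, which already gives the asserted bound on that range.

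It remains to treat the two degenerate regimes, and this is where the work --- and essentially the only difficulty --- lies. For $L\downarrow1$, write $\delta:=L-1$, cover $A_{1}$ by a fixed finite family of curved boxes $\{r\omega:1<r<L,\ \omega\in U_{i}\}$ with $\{U_{i}\}$ a fixed cover of $S^{2}$ by geodesic balls, flatten each by a bi-Lipschitz map (with $\delta$-independent constants) onto the reference box $(-1,1)^{2}\times(0,\delta)$, solve there with the box Bogovskii operator, and pull back; since the Bogovskii constant of a box of thickness $\delta$ and bounded transverse extent is $\lesssim_{q}\delta^{-1}$, this yields $\kappa(L,q)\lesssim_{q}\delta^{-1}=(L-1)^{-1}$, matching the $(L-1)^{-1}$ rate in the statement. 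For $L\uparrow\infty$, decompose $A_{1}$ into the dyadic shells $S_{k}=B_{2^{k+1}}\setminus\bar B_{2^{k}}$, $0\le k\le m$ with $2^{m}\sim L$, each further cut into boundedly many pieces star-shaped with respect to a ball of radius $\sim2^{k}$, so that by the rescaling above every piece has the one constant $C_{q}$; the danger is that gluing the $\sim\log L$ shells accumulates a constant, and to avoid this one routes the accumulated flux outward by a single radial transport field handling all junctions at once, whose cost at scale $2^{k}$ decays geometrically in $k$ (the transported flux $\bigl|\int_{B_{2^{k+1}}}f\bigr|$ is spread over a sphere of area $\sim 2^{2k}$), so that passing to logarithmic radial coordinates and applying a weighted Hardy inequality with $L$-independent constant bounds the total by $\lesssim_{q}\|f\|_{L^{q}(A_{1})}$ with no logarithmic loss. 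The main obstacle is exactly this bookkeeping of constants in the two limits: proving the sharp $(L-1)^{-1}$ blow-up for thin annuli (equivalently, the optimal inverse-thickness dependence of Bogovskii on a thin box), and organizing the $\log L$-many gluings in the fat case so that the radial mass transport telescopes into a convergent series rather than compounding.
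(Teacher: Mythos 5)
The paper never proves this lemma: it is imported verbatim from \cite{T2021}, Lemma 3, and only its consequence \eqref{eq:est of w} (with $L=\frac{R+\rho}{2\rho}$) is ever used, so there is no internal proof to compare yours with; your proposal has to stand on its own. Your scaling reduction to the unit inner radius is correct, and the overall strategy (decompose the normalized annulus into star-shaped pieces, glue by transporting the excess mass through overlaps, and track the constant in the two degenerate regimes $L\downarrow1$ and $L\uparrow\infty$) is the standard route and presumably close to the cited source. But as a proof it is incomplete exactly where you yourself locate the difficulty: the bound $\lesssim_q\delta^{-1}$ for the divergence inverse on a box of thickness $\delta$ is itself a nontrivial sharp thin-domain result --- the star-shaped Bogovskii estimate you quote, with $\rho\sim\delta$ and diameter $\sim1$, only gives $\delta^{-4}$ --- so you are invoking a stronger tool than the one you state; the mass transfer between overlapping thin boxes (overlaps of volume $\sim\delta$, transported mass controlled only in $L^1$) is asserted rather than checked, and this is precisely where spurious powers of $\delta$ could enter; and the large-$L$ telescoping/weighted-Hardy argument is a plan, not an argument. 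So the crux of the lemma is announced, not proved.

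There is also a mismatch with the statement itself. Even if both sketches were completed, your construction yields $\|\nabla v\|_{L^q(A)}\le C_q\max\{1,(L-1)^{-1}\}\,\|f\|_{L^q(A)}$ (you tacitly concede this when you replace the target by ``$C_q(L-1)^{-1}$ times a fixed power of $L$''), which is not the quoted inequality: the quoted factor $\frac{C_q}{(L-1)L^{1-1/q}}$ decays to $0$ as $L\to\infty$. You should have flagged that no argument can produce such decay with $C_q$ independent of $L$: for any $v\in W_0^{1,q}(A)$ with $\operatorname{div}v=f$ one has $\|f\|_{L^q(A)}=\|\operatorname{tr}\nabla v\|_{L^q(A)}\le\sqrt3\,\|\nabla v\|_{L^q(A)}$, so every right inverse of the divergence has operator norm at least $1/\sqrt3$, and the quoted bound fails once $L$ is large. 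Hence the statement as transcribed here can only be meant for $L$ in a bounded range (or with a different placement of the $L$-power); in this paper the lemma is applied only through \eqref{eq:est of w}, where the factor $L^{1-1/q}\ge1$ is discarded anyway and only the $(L-1)^{-1}$ rate matters, and your weaker (but actually attainable) bound would serve every use of the lemma in Lemma \ref{lem:mian est} and beyond, since the subsequent estimates already replace $\rho$ by $R$.
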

	
	Using the above three lemmas, we obtain the following energy inequality, which is crucial for proving the main theorem.
	\begin{Lem} \label{lem:mian est}
		Assume $2<\rho<R$, $3\leq s\leq6$ and $\beta>0$. Let $\boldsymbol{V}$ and $\boldsymbol{W}$ satisfy \eqref{eq:main condition}. Then
		\begin{eqnarray*}
			&&\left\|\left(\nabla u, \nabla B\right)\sqrt{\eta_2}\right\|_{L^2(B_R)}^2 \\&\lesssim& R^{10}(R-\rho)^{-10}(\log R)^{\frac{3s}{s+6}\beta}\|\left(\nabla u, \nabla B\right)\|_{L^2(B_{\frac{\rho+R}{2}}\setminus B_\rho)}\left(\|\left(\nabla u, \nabla B\right)\|_{L^2(B_R)}^{\frac{12-s}{s+6}}+1\right) . 
		\end{eqnarray*}
	\end{Lem}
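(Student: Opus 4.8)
The plan is to derive the inequality from a localized energy identity, controlling the nonlinear and pressure terms through the potential representations $u=\operatorname{div}\overline{\boldsymbol{V}}$, $B=\operatorname{div}\overline{\boldsymbol{W}}$, Lemmas \ref{lem:est of 2-norm}--\ref{lem:deal with the pressure}, and the growth hypothesis \eqref{eq:main condition}. First I would test the first equation of \eqref{eq:MHD} against $u\eta_2$ and the second against $B\eta_2$, integrate, add, and use $\operatorname{div}u=\operatorname{div}B=0$: after an integration by parts the cubic magnetic contributions from $\int(B\cdot\nabla B)\cdot u\,\eta_2$ and $\int(B\cdot\nabla u)\cdot B\,\eta_2$ combine so that the interior term $\int(B\otimes B):\nabla u\,\eta_2$ cancels, leaving
\[
\|(\nabla u,\nabla B)\sqrt{\eta_2}\|_{L^2(B_R)}^2 = J_1+J_2+J_3,
\]
with $J_1=-\int\nabla u:(u\otimes\nabla\eta_2)-\int\nabla B:(B\otimes\nabla\eta_2)$ the diffusion cross terms, $J_2=\tfrac12\int(|u|^2+|B|^2)u\cdot\nabla\eta_2-\int(B\cdot u)(B\cdot\nabla\eta_2)$ the remaining cubic terms, and $J_3=\int p\,u\cdot\nabla\eta_2$ the pressure term. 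Every integrand is supported in the annulus $A:=B_{(\rho+R)/2}\setminus B_\rho=\operatorname{supp}\nabla\eta_2$, where $\eta_1\equiv1$ and $|\nabla\eta_2|\lesssim(R-\rho)^{-1}$.

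For $J_1$, Hölder on $A$ gives $|J_1|\lesssim(R-\rho)^{-1}\|(u,B)\|_{L^2(A)}\|(\nabla u,\nabla B)\|_{L^2(A)}$, and since $\eta_1\equiv1$ on $A$ I would replace $\|(u,B)\|_{L^2(A)}$ by $\|(u\eta_1,B\eta_1)\|_{L^2(B_R)}$ and apply Lemma \ref{lem:est of 2-norm} and then \eqref{eq:main condition}; this yields a term of the claimed shape in which $\|(\nabla u,\nabla B)\|_{L^2(B_R)}$ appears to the power $\tfrac12\le\tfrac{12-s}{s+6}$. For $J_2$, I would substitute one copy of $u=\operatorname{div}\overline{\boldsymbol{V}}$ (resp. $B=\operatorname{div}\overline{\boldsymbol{W}}$) and integrate by parts; the term in which the derivative hits $\nabla\eta_2$ a second time pairs the symmetric Hessian $\nabla^2\eta_2$ with the antisymmetric potential and vanishes, so $|J_2|\lesssim(R-\rho)^{-1}\int_A(|\overline{\boldsymbol{V}}|+|\overline{\boldsymbol{W}}|)(|u|+|B|)(|\nabla u|+|\nabla B|)$. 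Hölder with exponents $(s,\tfrac{2s}{s-2},2)$ reduces matters to estimating $\|(u,B)\|_{L^{2s/(s-2)}(A)}$, for which I would use Gagliardo--Nirenberg interpolation between the $L^{4s/(s+2)}$ bound of Lemma \ref{lem: est of 4s} and the Sobolev embedding $\dot W^{1,2}\hookrightarrow L^6$ applied to $(u,B)$ times an auxiliary cut-off equal to $1$ on $A$ and supported in $B_R$ (the gradient-of-cut-off term reabsorbed via Lemma \ref{lem:est of 2-norm}). The interpolation exponent $\theta=\tfrac{4(s-3)}{s+6}$ on the $L^{4s/(s+2)}$ side is chosen precisely so that the leading contribution to $\|(u,B)\|_{L^{2s/(s-2)}(A)}$ is proportional to $\|(\overline{\boldsymbol{V}},\overline{\boldsymbol{W}})\|_{L^s(B_R)}^{\theta/2}\|(\nabla u,\nabla B)\|_{L^2(B_R)}^{(12-s)/(s+6)}$; inserting \eqref{eq:main condition} the cumulative power of $\log R$ becomes $\beta(1+\tfrac\theta2)=\tfrac{3s}{s+6}\beta$, the power of $\|(\nabla u,\nabla B)\|_{L^2(B_R)}$ is exactly $\tfrac{12-s}{s+6}$, and the lower-order pieces of Lemmas \ref{lem:est of 2-norm}--\ref{lem: est of 4s} carry strictly smaller powers of the Dirichlet energy, hence contribute only to the ``$+1$''.

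The pressure term $J_3$ is the main point. Since $f:=u\cdot\nabla\eta_2$ has zero mean on $B_R$ (integrate by parts, $\operatorname{div}u=0$) and is supported in $A$, Lemma \ref{lem:deal with the pressure} with $L=\tfrac{R+\rho}{2\rho}$ (so $L-1=\tfrac{R-\rho}{2\rho}$) furnishes $\Phi=\operatorname{Bog}f\in W_0^{1,q}(A)$ with $\operatorname{div}\Phi=f$ and $\|\nabla\Phi\|_{L^q(A)}\lesssim\tfrac{\rho}{R-\rho}\|f\|_{L^q(A)}\lesssim\tfrac{\rho}{(R-\rho)^2}\|u\|_{L^q(A)}$. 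Writing $J_3=\int p\,\operatorname{div}\Phi=-\int\nabla p\cdot\Phi$, substituting $\nabla p=\Delta u-u\cdot\nabla u+B\cdot\nabla B$ from \eqref{eq:MHD}, and integrating by parts ($\Phi\in W_0^{1,q}(A)$, $\operatorname{div}u=0$), I would obtain
\[
J_3=\int\nabla u:\nabla\Phi-\int(u\otimes u):\nabla\Phi+\int(B\otimes B):\nabla\Phi .
\]
The first integral is $\lesssim\|\nabla u\|_{L^2(A)}\|\nabla\Phi\|_{L^2(A)}$ and is handled like $J_1$. For the cubic integrals I would invoke the potential a second time: $\int(u\otimes u):\nabla\Phi=-\int\overline{\boldsymbol{V}}_{kl}\,\partial_lu_i\,\partial_k\Phi_i$ (the $\nabla^2\Phi$ term again vanishing by antisymmetry), so that $|\int(u\otimes u):\nabla\Phi|\lesssim\tfrac{\rho}{(R-\rho)^2}\|\overline{\boldsymbol{V}}\|_{L^s(B_R)}\|\nabla u\|_{L^2(A)}\|u\|_{L^{2s/(s-2)}(A)}$, which has exactly the structure already treated for $J_2$; the same applies to $\int(B\otimes B):\nabla\Phi$ with $\overline{\boldsymbol{W}}$. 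This second use of the potential representation — after the momentum equation has been used on the pressure — is the crux: without it one cannot recover the factor $\|\nabla u\|_{L^2(A)}$, and the power of the Dirichlet energy would come out too large.

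Summing the bounds for $J_1,J_2,J_3$, every term then carries $\|(\nabla u,\nabla B)\|_{L^2(B_{(\rho+R)/2}\setminus B_\rho)}$ to the first power, the leading ones also $(\log R)^{\frac{3s}{s+6}\beta}$ and $\|(\nabla u,\nabla B)\|_{L^2(B_R)}^{(12-s)/(s+6)}$, while all polynomial prefactors in $R$ and $(R-\rho)^{-1}$ (together with the slightly larger logarithmic weights carried by the lower-order terms) are dominated by $R^{10}(R-\rho)^{-10}$; this gives the asserted inequality. I expect the hard part to be the pressure estimate of the previous paragraph, and secondarily the exponent bookkeeping of the second paragraph, where the precise choice $\theta=\frac{4(s-3)}{s+6}$ is what pins the logarithmic exponent to the stated value $\frac{3s}{s+6}\beta$.
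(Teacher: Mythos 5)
Your proposal is correct and follows essentially the same route as the paper: a localized energy identity, the Bogovskii map of Lemma \ref{lem:deal with the pressure} with $L=\tfrac{R+\rho}{2\rho}$ to remove the pressure, the antisymmetric potentials paired against symmetric second derivatives to integrate the cubic terms by parts, and Lemmas \ref{lem:est of 2-norm}--\ref{lem: est of 4s} with the interpolation exponent $\tfrac{4s-12}{s+6}$ between $L^{4s/(s+2)}$ and $L^6$, which pins down the leading powers $(\log R)^{\frac{3s}{s+6}\beta}$ and $\|(\nabla u,\nabla B)\|_{L^2(B_R)}^{\frac{12-s}{s+6}}$ exactly as in the paper's terms $I$--$V$. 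The only cosmetic differences (cancelling the interior magnetic coupling via $\operatorname{div}B=0$ before invoking the potentials, and converting $\int p\,u\cdot\nabla\eta_2$ through the momentum equation rather than testing with $u\eta_2-w$ from the outset) lead to the same set of integrals.
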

	\begin{proof}
		We divide the proof into two steps.\\
		\underline{\bf Step I. Dealing with the pressure.}
		 Using Lemma \ref{lem:deal with the pressure} with $L = \frac{R+\rho}{2\rho}$, for any $q \in(1, \infty)$, there exists a constant $C_q>0$ and a function ${w} \in W_0^{1, q}\left(B_{(R+\rho) / 2} \backslash B_\rho\right)$ such that $\operatorname{div} {w}={u} \cdot \nabla \eta_2$ and
		\begin{equation}\begin{aligned} \label{eq:est of w}
			\int_{B_{(R+\rho) / 2} \backslash B_\rho}|\nabla {w}|^q ~\mathrm{d}x \leq C_q\left(\frac{\rho}{R-\rho}\right)^q \int_{B_{(R+\rho) / 2} \backslash B_\rho}\left|{u} \cdot \nabla \eta_2\right|^q ~\mathrm{d}x.
		\end{aligned}\end{equation}
		We then extend ${w}=0$ in $B_\rho$. Integrating by part, we find
		\begin{equation}\begin{aligned} \label{treat p}
				\int_{B_R} \nabla p \cdot \left(u\eta_2 - w\right) ~\mathrm{d}x = -\int_{B_R} pu\cdot\nabla\eta_2 ~\mathrm{d}x  + \int_{B_R} p \nabla\cdot w ~\mathrm{d}x = 0.
		\end{aligned}\end{equation}
		
		\underline{\bf Step II. $L^2$-estimate.}
		Multiplying the equation $\eqref{eq:MHD}_1$ by ${u} \eta_2-{w}$, multiplying the equation $\eqref{eq:MHD}_2$ by $B \eta_2$  and integrating by parts in $B_R$, we have
		\begin{equation}\begin{aligned} \label{eq:energy}
				\int_{B_R}|\nabla {u}|^2 \eta_2 +  |\nabla {B}|^2 \eta_2 ~\mathrm{d}x= & -\int_{B_R} u\cdot\nabla u\cdot\nabla\eta_2 + B\cdot\nabla B\cdot\nabla\eta_2 ~\mathrm{d}x   +   \int_{B_R} \nabla {u} \cdot \nabla {w} ~\mathrm{d}x \\
				& + \int_{B_R} u\cdot\nabla u\cdot w - B\cdot\nabla B\cdot w ~\mathrm{d}x  -\int_{B_R} u\cdot\nabla u\cdot u \eta_2  +  u\cdot\nabla B\cdot B \eta_2 ~\mathrm{d}x   \\
				&+ \int_{B_R} B\cdot\nabla B\cdot u\eta_2  + B\cdot\nabla u \cdot B\eta_2 ~\mathrm{d}x\\
				= & : I+I I+I I I+I V + V,
		\end{aligned}\end{equation}
	    where the pressure vanishes due to \eqref{treat p}. For I, by H\"{o}lder inequality and Lemma \ref{lem:est of 2-norm}, it holds
		\begin{equation}\begin{aligned} \label{eq:I}
				|I| & \lesssim(R-\rho)^{-1} \left(\|\nabla u\|_{L^2(B_{(R+\rho) / 2} \backslash B_\rho)}   \| u\|_{L^2(B_{(R+\rho) / 2} \backslash B_\rho)} +  \|\nabla B\|_{L^2(B_{(R+\rho) / 2} \backslash B_\rho)}\| B\|_{L^2(B_{(R+\rho) / 2} \backslash B_\rho)}\right)  \\ 
				& \lesssim(R-\rho)^{-1}   \left(\|\nabla u\|_{L^2(B_{(R+\rho) / 2} \backslash B_\rho)}\| u\eta_1\|_{L^2(B_R)} + \|\nabla B\|_{L^2(B_{(R+\rho) / 2} \backslash B_\rho)}\| B\eta_1\|_{L^2(B_R)}\right) \\ 
				& \lesssim(R-\rho)^{-1}\|\left(\nabla u, \nabla B\right)\|_{L^2(B_{(R+\rho) / 2} \backslash B_\rho)}
				\left[R^{\frac{3(s-2)}{4 s}}  \|\left(\nabla {u}, \nabla B\right)\|_{L^2(B_R)}^{\frac{1}{2}} \left(\|\overline{\boldsymbol{V}}\|_{L^s(B_R)}^{\frac{1}{2}}+\|\overline{\boldsymbol{W}}\|_{L^s(B_R)}^{\frac{1}{2}}\right)  \right.\\
				&\left.\quad+   R^{\frac{3(s-2)}{2 s}}(R-\rho)^{-1}\left(\|\overline{\boldsymbol{V}}\|_{L^s(B_R)} + \|\overline{\boldsymbol{W}}\|_{L^s(B_R)}\right)\right] .
		\end{aligned}\end{equation}
	    For II, by Hölder inequality and \eqref{eq:est of w} with $q=2$, we get
	\begin{equation}\begin{aligned} \label{eq:II}
			|II| & \lesssim(R-\rho)^{-1} \|\nabla u\|_{L^2(B_{(R+\rho) / 2} \backslash B_\rho)}   \| \nabla w\|_{L^2(B_{(R+\rho) / 2} \backslash B_\rho)}  \\ 
			& \lesssim R(R-\rho)^{-2}  \|\nabla u\|_{L^2(B_{(R+\rho) / 2} \backslash B_\rho)}   \| u\|_{L^2(B_{(R+\rho) / 2} \backslash B_\rho)} .
	\end{aligned}\end{equation}
     Combining \eqref{eq:I} and \eqref{eq:II}, we have
     \begin{equation}\begin{aligned} \label{eq:I+}
     		&|I|+|II|
     		\lesssim R(R-\rho)^{-2}\|\left(\nabla u, \nabla B\right)\|_{L^2(B_{(R+\rho) / 2} \backslash B_\rho)}\\
     		&\cdot\left[R^{\frac{3(s-2)}{4 s}}  \|\left(\nabla {u}, \nabla B\right)\|_{L^2(B_R)}^{\frac{1}{2}} \left(\|\overline{\boldsymbol{V}}\|_{L^s(B_R)}^{\frac{1}{2}}+\|\overline{\boldsymbol{W}}\|_{L^s(B_R)}^{\frac{1}{2}}\right) 
     		+   R^{\frac{3(s-2)}{2 s}}(R-\rho)^{-1}\left(\|\overline{\boldsymbol{V}}\|_{L^s(B_R)} + \|\overline{\boldsymbol{W}}\|_{L^s(B_R)}\right)\right] .
     \end{aligned}\end{equation}
	For III, by integration by parts, it holds
	\begin{equation*}\begin{aligned} 
			III &= \int_{B_R} \partial_k \overline{\boldsymbol{V}}_{ki} \partial_i u_j w_j  -  \partial_k \overline{\boldsymbol{W}}_{ki} \partial_i B_j w_j ~\mathrm{d}x \\
			&= -\int_{B_R}  \overline{\boldsymbol{V}}_{ki} \partial_i u_j \partial_k w_j  -   \overline{\boldsymbol{W}}_{ki} \partial_i B_j \partial_kw_j ~\mathrm{d}x,
	\end{aligned}\end{equation*}
	where we use the fact that $\overline{\boldsymbol{V}}$ and $\overline{\boldsymbol{W}}$ are anti-symmetric, $\nabla^2(u\cdot w)$ and $\nabla^2(B\cdot w)$ are symmetric. Then by Hölder inequality and \eqref{eq:est of w} with $q=\frac{2s}{s-2}$, we have
	\begin{equation}\begin{aligned} \label{eq:III}
			|III| &\lesssim \|\nabla w\|_{L^{\frac{2s}{s-2}}(B_{(R+\rho) / 2} \backslash B_\rho)}\|\left(\nabla u, \nabla B\right)\|_{L^2(B_{(R+\rho) / 2} \backslash B_\rho)} \left(\|\overline{\boldsymbol{V}}\|_{L^s(B_R)} + \|\overline{\boldsymbol{W}}\|_{L^s(B_R)}\right) \\
			&\lesssim \rho\left(R-\rho\right)^{-2} \left(\|\overline{\boldsymbol{V}}\|_{L^s(B_R)} + \|\overline{\boldsymbol{W}}\|_{L^s(B_R)}\right) \|\left(\nabla u, \nabla B\right)\|_{L^2(B_{(R+\rho) / 2} \backslash B_\rho)}  \|u\eta_1^2\|_{L^{\frac{2s}{s-2}}(B_R)}.
	\end{aligned}\end{equation}
	For IV, by integration by parts, it holds
	\begin{equation}\begin{aligned}
			IV &= -\frac12\int_{B_R} \partial_k \overline{\boldsymbol{V}}_{ki} \partial_i |u|^2 \eta_2  -  \partial_k \overline{\boldsymbol{V}}_{ki} \partial_i |B|^2 \eta_2 ~\mathrm{d}x \\
			&= \frac12\int_{B_R}  \overline{\boldsymbol{V}}_{ki} \partial_i |u|^2 \partial_k \eta_2 -   \overline{\boldsymbol{V}}_{ki} \partial_i |B|^2 \partial_k \eta_2 ~\mathrm{d}x,
	\end{aligned}\end{equation}
	where we use the fact that $\nabla^2 |u|^2$ and $\nabla^2 |B|^2
	$ are symmetric. Then by Hölder inequality, we have
	\begin{equation}\begin{aligned} \label{eq:IV}
			|IV| \lesssim \left(R-\rho\right)^{-1} \|\overline{\boldsymbol{V}}\|_{L^s(B_R)}  \|\left(\nabla u, \nabla B\right)\|_{L^2(B_{(R+\rho) / 2} \backslash B_\rho)}  \left(\|u\eta_1^2\|_{L^{\frac{2s}{s-2}}(B_R)}+ \|B\eta_1^2\|_{L^{\frac{2s}{s-2}}(B_R)}\right).
	\end{aligned}\end{equation}
     For V, by integration by parts, it holds
     \begin{equation*}\begin{aligned}
     		V &= \int_{B_R} \partial_k \overline{\boldsymbol{W}}_{ki} \partial_i B_j u_j \eta_2  +  \partial_k \overline{\boldsymbol{W}}_{ki} \partial_i u_j B_j \eta_2 ~\mathrm{d}x \\
     		&= -\int_{B_R}   \overline{\boldsymbol{W}}_{ki} \partial_i B_j  u_j  \partial_k\eta_2   + \overline{\boldsymbol{W}}_{ki} \partial_i u_j  B_j \partial_k \eta_2 ~\mathrm{d}x ,
     \end{aligned}\end{equation*}
 where we use the fact that $\left[\partial_i B_j \partial_k u_j + \partial_i u_j \partial_k B_j\right]$
  is symmetric. Then by Hölder inequality, we have
  \begin{equation}\begin{aligned} \label{eq:V}
  		|V| \lesssim \left(R-\rho\right)^{-1} \|\overline{\boldsymbol{W}}\|_{L^s(B_R)}  \|\left(\nabla u, \nabla B\right)\|_{L^2(B_{(R+\rho) / 2} \backslash B_\rho)}  \left(\|u\eta_1^2\|_{L^{\frac{2s}{s-2}}(B_R)}+ \|B\eta_1^2\|_{L^{\frac{2s}{s-2}}(B_R)}\right).
  \end{aligned}\end{equation}
	Combining \eqref{eq:III}, \eqref{eq:IV} and \eqref{eq:V}, we get 
	\begin{equation}\begin{aligned}
			&|III|+|IV| + |V|\\
			&\lesssim R\left(R-\rho\right)^{-2} \left(\|\overline{\boldsymbol{V}}\|_{L^s(B_R)} + \|\overline{\boldsymbol{W}}\|_{L^s(B_R)}\right) \|\left(\nabla u, \nabla B\right)\|_{L^2(B_{(R+\rho) / 2} \backslash B_\rho)}  \left(\|u\eta_1^2\|_{L^{\frac{2s}{s-2}}(B_R)}  + \|B\eta_1^2\|_{L^{\frac{2s}{s-2}}(B_R)}\right).
	\end{aligned}\end{equation}
	By interpolation inequality and Sobolev inequality, we get
	\begin{equation}\begin{aligned} \label{eq:u eta2}
			\|u\eta_1^2\|_{L^{\frac{2s}{s-2}}(B_R)} &\leq \|u\eta_1^2\|^{\frac{4s-12}{s+6}}_{L^{\frac{4s}{s+2}}(B_R)} \|u\eta_1^2\|^{\frac{18-3s}{s+6}}_{L^{6}(B_R)} \\
			&\lesssim \|u\eta_1^2\|^{\frac{4s-12}{s+6}}_{L^{\frac{4s}{s+2}}(B_R)} \|\nabla(u\eta_1^2)\|^{\frac{18-3s}{s+6}}_{L^{2}(B_R)} \\
			&\lesssim \|u\eta_1^2\|^{\frac{4s-12}{s+6}}_{L^{\frac{4s}{s+2}}(B_R)}  \left(\|\nabla u\|_{L^2(B_R)}^{\frac{18-3s}{s+6}}  +   (R-\rho)^{-\frac{18-3s}{s+6}}  \| u \eta_1\|_{L^2(B_R)}^{\frac{18-3s}{s+6}}  \right).
	\end{aligned}\end{equation}
	Similarly, we get
	\begin{equation}\begin{aligned} \label{eq:B eta2}
				\|B\eta_1^2\|_{L^{\frac{2s}{s-2}}(B_R)} \lesssim \|B\eta_1^2\|^{\frac{4s-12}{s+6}}_{L^{\frac{4s}{s+2}}(B_R)}  \left(\|\nabla B\|_{L^2(B_R)}^{\frac{18-3s}{s+6}}  +   (R-\rho)^{-\frac{18-3s}{s+6}}  \| B \eta_1\|_{L^2(B_R)}^{\frac{18-3s}{s+6}}  \right).
	\end{aligned}\end{equation}
	Combining \eqref{eq:u eta2} and \eqref{eq:B eta2}, using Lemma \ref{lem:est of 2-norm} and Lemma \ref{lem: est of 4s}, we have
	\begin{eqnarray*}
		&&\|u\eta_1^2\|_{L^{\frac{2s}{s-2}}(B_R)} + \|B\eta_1^2\|_{L^{\frac{2s}{s-2}}(B_R)}  \\ &\lesssim&\left(\|\overline{\boldsymbol{V}}\|_{L^s(B_R)} ^{\frac{2 s-6}{s+6}}+\|\overline{\boldsymbol{W}}\|_{L^s(B_R)}^{\frac{2 s-6}{s+6}}\right)\|\left(\nabla u,\nabla B\right)\|_{L^2(B_R)}^{\frac{12-s}{s+6}} \\
		&& +(R-\rho)^{-\frac{18-3 s}{s+6}} R^{\frac{9(s-2)(6-s)}{4 s(s+6)}}\left(\|\overline{\boldsymbol{V}}\|_{L^s(B_R)}^{\frac{1}{2}}+\|\overline{\boldsymbol{W}}\|_{L^s(B_R)}^{\frac{1}{2}}\right)\|\left(\nabla u,\nabla B\right)\|_{L^2(B_R)}^{\frac{1}{2}} \\
		&& +(R-\rho)^{-\frac{6(6-s)}{s+6}} R^{\frac{9(s-2)(6-s)}{2 s(s+6)}}\left(\|\overline{\boldsymbol{V}}\|_{L^s(B_R)}^{\frac{12-s}{s+6}}+\|\overline{\boldsymbol{W}}\|_{L^s(B_R)}^{\frac{12-s}{s+6}}\right)\|\left(\nabla u,\nabla B\right)\|_{L^2(B_R)}^{\frac{2 s-6}{s+6}} \\
		&& +(R-\rho)^{-\frac{4 s-12}{s+6}} R^{\frac{3(s-2)(s-3)}{s(s+6)}}\left(\|\overline{\boldsymbol{V}}\|_{L^s(B_R)}^{\frac{4s-12}{s+6}}+\|\overline{\boldsymbol{W}}\|_{L^s(B_R)}^{\frac{4s-12}{s+6}}\right)\|\left(\nabla u,\nabla B\right)\|_{L^2(B_R)}^{\frac{18-3 s}{s+6}} \\
		&& +(R-\rho)^{-1} R^{\frac{3(s-2)}{4 s}}\left(\|\overline{\boldsymbol{V}}\|_{L^s(B_R)}^{\frac{5 s-6}{2(s+6)}}+\|\overline{\boldsymbol{W}}\|_{L^s(B_R)}^{\frac{5 s-6}{2(s+6)}}\right)\|\left(\nabla u,\nabla B\right)\|_{L^2(B_R)}^{\frac{18-3 s}{2(s+6)}} \\
		&& +(R-\rho)^{-\frac{2(12-s)}{s+6}} R^{\frac{3(12-s)(s-2)}{2 s(s+6)}}\left(\|\overline{\boldsymbol{V}}\|_{L^s(B_R)}+\|\overline{\boldsymbol{W}}\|_{L^s(B_R)}\right). 
	\end{eqnarray*}
    Hence, we get
    \begin{eqnarray}
    	\label{eq:III+}
    	&&|III|+|IV| + |V| \no\\
    	&\lesssim& R\left(R-\rho\right)^{-2}  \|\left(\nabla u, \nabla B\right)\|_{L^2(B_{(R+\rho) / 2} \backslash B_\rho)}  \left[\left(\|\overline{\boldsymbol{V}}\|_{L^s(B_R)} ^{\frac{3s}{s+6}}+\|\overline{\boldsymbol{W}}\|_{L^s(B_R)}^{\frac{3s}{s+6}}\right)\|\left(\nabla u,\nabla B\right)\|_{L^2(B_R)}^{\frac{12-s}{s+6}} \right. \no\\
    	&&\left. +(R-\rho)^{-\frac{18-3 s}{s+6}} R^{\frac{9(s-2)(6-s)}{4 s(s+6)}}\left(\|\overline{\boldsymbol{V}}\|_{L^s(B_R)}^{\frac{3}{2}}+\|\overline{\boldsymbol{W}}\|_{L^s(B_R)}^{\frac{3}{2}}\right)\|\left(\nabla u,\nabla B\right)\|_{L^2(B_R)}^{\frac{1}{2}} \right.\no\\
    	&&\left. +(R-\rho)^{-\frac{6(6-s)}{s+6}} R^{\frac{9(s-2)(6-s)}{2 s(s+6)}}\left(\|\overline{\boldsymbol{V}}\|_{L^s(B_R)}^{\frac{18}{s+6}}+\|\overline{\boldsymbol{W}}\|_{L^s(B_R)}^{\frac{18}{s+6}}\right)\|\left(\nabla u,\nabla B\right)\|_{L^2(B_R)}^{\frac{2 s-6}{s+6}} \right.\no\\
    	&&\left. +(R-\rho)^{-\frac{4 s-12}{s+6}} R^{\frac{3(s-2)(s-3)}{s(s+6)}}\left(\|\overline{\boldsymbol{V}}\|_{L^s(B_R)}^{\frac{5s-6}{s+6}}+\|\overline{\boldsymbol{W}}\|_{L^s(B_R)}^{\frac{5s-6}{s+6}}\right)\|\left(\nabla u,\nabla B\right)\|_{L^2(B_R)}^{\frac{18-3 s}{s+6}} \right.\no\\
    	&&\left. +(R-\rho)^{-1} R^{\frac{3(s-2)}{4 s}}\left(\|\overline{\boldsymbol{V}}\|_{L^s(B_R)}^{\frac{7 s+6}{2(s+6)}}+\|\overline{\boldsymbol{W}}\|_{L^s(B_R)}^{\frac{7 s+6}{2(s+6)}}\right)\|\left(\nabla u,\nabla B\right)\|_{L^2(B_R)}^{\frac{18-3 s}{2(s+6)}} \right.\no\\
    	&&\left. +(R-\rho)^{-\frac{2(12-s)}{s+6}} R^{\frac{3(12-s)(s-2)}{2 s(s+6)}}\left(\|\overline{\boldsymbol{V}}\|_{L^s(B_R)}^2+\|\overline{\boldsymbol{W}}\|_{L^s(B_R)}^2\right)\right].
    \end{eqnarray}
	Putting \eqref{eq:energy}, \eqref{eq:I+} and \eqref{eq:III+} together, we get
	\begin{eqnarray} \label{eq:all I}
		&&\int_{B_R}|\nabla {u}|^2 \eta_2 +  |\nabla {B}|^2 \eta_2 ~\mathrm{d}x \no\\
		&\lesssim& R\left(R-\rho\right)^{-2}  \|\left(\nabla u, \nabla B\right)\|_{L^2(B_{(R+\rho) / 2} \backslash B_\rho)}     \left[ \left(\|\overline{\boldsymbol{V}}\|_{L^s(B_R)} ^{\frac{3s}{s+6}}+\|\overline{\boldsymbol{W}}\|_{L^s(B_R)}^{\frac{3s}{s+6}}\right)\|\left(\nabla u,\nabla B\right)\|_{L^2(B_R)}^{\frac{12-s}{s+6}} \right. \no\\
		&&\left. +(R-\rho)^{-\frac{18-3 s}{s+6}} R^{\frac{9(s-2)(6-s)}{4 s(s+6)}}\left(\|\overline{\boldsymbol{V}}\|_{L^s(B_R)}^{\frac{3}{2}}+\|\overline{\boldsymbol{W}}\|_{L^s(B_R)}^{\frac{3}{2}}\right)\|\left(\nabla u,\nabla B\right)\|_{L^2(B_R)}^{\frac{1}{2}} \right.\no\\
		&&\left. +(R-\rho)^{-\frac{6(6-s)}{s+6}} R^{\frac{9(s-2)(6-s)}{2 s(s+6)}}\left(\|\overline{\boldsymbol{V}}\|_{L^s(B_R)}^{\frac{18}{s+6}}+\|\overline{\boldsymbol{W}}\|_{L^s(B_R)}^{\frac{18}{s+6}}\right)\|\left(\nabla u,\nabla B\right)\|_{L^2(B_R)}^{\frac{2 s-6}{s+6}} \right.\no\\
		&&\left. +(R-\rho)^{-\frac{4 s-12}{s+6}} R^{\frac{3(s-2)(s-3)}{s(s+6)}}\left(\|\overline{\boldsymbol{V}}\|_{L^s(B_R)}^{\frac{5s-6}{s+6}}+\|\overline{\boldsymbol{W}}\|_{L^s(B_R)}^{\frac{5s-6}{s+6}}\right)\|\left(\nabla u,\nabla B\right)\|_{L^2(B_R)}^{\frac{18-3 s}{s+6}} \right.\no\\
		&&\left. +(R-\rho)^{-1} R^{\frac{3(s-2)}{4 s}}\left(\|\overline{\boldsymbol{V}}\|_{L^s(B_R)}^{\frac{7 s+6}{2(s+6)}}+\|\overline{\boldsymbol{W}}\|_{L^s(B_R)}^{\frac{7 s+6}{2(s+6)}}\right)\|\left(\nabla u,\nabla B\right)\|_{L^2(B_R)}^{\frac{18-3 s}{2(s+6)}} \right.\no\\
		&&\left. +(R-\rho)^{-\frac{2(12-s)}{s+6}} R^{\frac{3(12-s)(s-2)}{2 s(s+6)}}\left(\|\overline{\boldsymbol{V}}\|_{L^s(B_R)}^2+\|\overline{\boldsymbol{W}}\|_{L^s(B_R)}^2\right) \right.\no\\
		&&\left. + R^{\frac{3(s-2)}{4 s}}  \|\left(\nabla {u}, \nabla B\right)\|_{L^2(B_R)}^{\frac{1}{2}} \left(\|\overline{\boldsymbol{V}}\|_{L^s(B_R)}^{\frac{1}{2}}+\|\overline{\boldsymbol{W}}\|_{L^s(B_R)}^{\frac{1}{2}}\right)  \right.\no\\
		&&\left.+   R^{\frac{3(s-2)}{2 s}}(R-\rho)^{-1}\left(\|\overline{\boldsymbol{V}}\|_{L^s(B_R)} + \|\overline{\boldsymbol{W}}\|_{L^s(B_R)}\right)\right]\no\\
		&:=&R\left(R-\rho\right)^{-2}  \|\left(\nabla u, \nabla B\right)\|_{L^2(B_{(R+\rho) / 2} \backslash B_\rho)} \sum_{i=1}^{8} I_i.
	\end{eqnarray}
	We point out that $I_1$ is the main term, since
	\begin{equation}\begin{aligned} \label{eq:I_1}
			I_1 \lesssim  R(\log R)^{\frac{3s}{s+6}\beta}\|\left(\nabla u,\nabla B\right)\|_{L^2(B_R)}^{\frac{12-s}{s+6}},
	\end{aligned}\end{equation}
    by \eqref{eq:main condition}.
    For $I_2$, by \eqref{eq:main condition}, we have
    \begin{equation}\begin{aligned} \label{eq:I_2}
    		I_2 &\lesssim R^{\frac{5 s-6}{4 s}}\left(\frac{R}{R-\rho}\right)^{\frac{18-3 s}{s+6}}(\log R)^{\frac32 \beta}\|\left(\nabla u,\nabla B\right)\|_{2, R}^{\frac{1}{2}} \\
    		&\lesssim R\left(\frac{R}{R-\rho}\right)^8 (\log R)^{\frac{3s}{s+6}\beta} \max\left\{\|\left(\nabla u,\nabla B\right)\|_{L^2(B_R)}^{\frac{12-s}{s+6}}, 1\right\}.
    \end{aligned}\end{equation}
    Similarly, we have
    \begin{equation*}\begin{aligned}
    		\sum_{i=3}^{8} I_i  \lesssim  R\left(\frac{R}{R-\rho}\right)^8 (\log R)^{\frac{3s}{s+6}\beta} \max\left\{\|\left(\nabla u,\nabla B\right)\|_{L^2(B_R)}^{\frac{12-s}{s+6}}, 1\right\}.
    \end{aligned}\end{equation*}
    which, combined with \eqref{eq:all I}, \eqref{eq:I_1} and \eqref{eq:I_2},  completes the proof.
\end{proof}

    With the lemma above, we obtain a growth estimate for the local Dirichlet energy via an iterative argument.
    
    \begin{Lem}[Growth estimates of the local Dirichlet energy] \label{lem:grow est}
    	Assuming that $\boldsymbol{V}$ and $\boldsymbol{W}$  satisfy \eqref{eq:main condition} for $3<s\leq6$, it holds
    	$$
    	\int_{B_R}|\nabla u|^2 + |\nabla B|^2 ~\mathrm{d}x \lesssim (\log R)^{\frac{3s}{s-3}\beta},
    	$$
    	for all $R>2$.
    \end{Lem}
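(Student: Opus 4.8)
The plan is to obtain the growth estimate by iterating the energy inequality of Lemma~\ref{lem:mian est} in the style of a hole-filling argument. Write $E(r):=\int_{B_r}|\nabla u|^2+|\nabla B|^2\,\mathrm dx=\|(\nabla u,\nabla B)\|_{L^2(B_r)}^2$; this is finite and continuous in $r$ since $(u,B)$ is smooth. Because $\eta_2\equiv1$ on $B_\rho$ and $B_{(\rho+R)/2}\setminus B_\rho\subset B_R$, Lemma~\ref{lem:mian est} (after bounding $\|(\nabla u,\nabla B)\|_{L^2(B_{(\rho+R)/2}\setminus B_\rho)}\le E(R)^{1/2}$) yields, for all $2<\rho<R$,
\[
E(\rho)\ \lesssim\ R^{10}(R-\rho)^{-10}(\log R)^{a}\,E(R)^{1/2}\bigl(E(R)^{\frac{12-s}{2(s+6)}}+1\bigr),\qquad a=\tfrac{3s}{s+6}\beta .
\]
The decisive point is that $\tfrac12+\tfrac{12-s}{2(s+6)}=\tfrac{9}{s+6}$ is \emph{strictly} less than $1$ exactly when $s>3$; this is precisely where the hypothesis $s\in(3,6]$ (rather than $s\in[3,6]$) is used, matching the third Remark.

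Next I would upgrade this reverse inequality to a hole-filling inequality. Setting $M:=R^{10}(R-\rho)^{-10}(\log R)^{a}$ and splitting the right-hand side into $M\,E(R)^{9/(s+6)}$ and $M\,E(R)^{1/2}$, Young's inequality (legitimate since both powers of $E(R)$ are below $1$) gives, for every small $\theta>0$,
\[
E(\rho)\ \le\ \theta\,E(R)+C_\theta\bigl(M^{\frac{s+6}{s-3}}+M^{2}\bigr)\ \le\ \theta\,E(R)+C_{\theta,s}\Bigl(\tfrac{R}{R-\rho}\Bigr)^{\gamma}(\log R)^{\mu},\qquad \gamma:=\tfrac{10(s+6)}{s-3},\quad \mu:=\tfrac{a(s+6)}{s-3}=\tfrac{3s}{s-3}\beta .
\]
Here one checks $\gamma\ge20$ for $s\le6$, so that (for $R\ge3$, where $M\ge(\log R)^a\ge1$) the $M^2$ term is dominated by $M^{(s+6)/(s-3)}$, and the exponent of $\log R$ comes out to be exactly $\mu$. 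The only care needed is to fix $\theta$ small \emph{depending on $\gamma$} (so that $\theta\,2^{\gamma}<\tfrac12$ in the iteration below); this merely inflates the constant.

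Finally I would run the standard dyadic iteration. For $R>4$ put $\rho_j:=R(1-2^{-j-1})$, so $\rho_0=R/2$, $\rho_j\uparrow R$, $\rho_{j+1}-\rho_j=R\,2^{-j-2}$, and $\rho_{j+1}/(\rho_{j+1}-\rho_j)\le2^{j+2}$. Applying the hole-filling inequality with $(\rho,R)$ replaced by $(\rho_j,\rho_{j+1})$ and using $\log\rho_{j+1}\le\log R$ gives $E(\rho_j)\le\theta\,E(\rho_{j+1})+C_s\,2^{(j+2)\gamma}(\log R)^{\mu}$; iterating $k$ times,
\[
E(R/2)\ \le\ \theta^{k}E(\rho_k)+C_s\,4^{\gamma}(\log R)^{\mu}\sum_{j=0}^{k-1}(\theta\,2^{\gamma})^{j}.
\]
Since $\theta\,2^{\gamma}<\tfrac12$ the series is bounded, and $\theta^{k}E(\rho_k)\le\theta^{k}E(R)\to0$ as $k\to\infty$ because $E(R)<\infty$ --- note this uses only \emph{finiteness} of the local Dirichlet energy, not its size, so there is no circularity. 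Letting $k\to\infty$ gives $E(R/2)\lesssim(\log R)^{\mu}$, and replacing $R$ by $2R$ (with $\log 2R\lesssim\log R$ for $R>2$, the range $2<R\le4$ being trivial from $E(R)\le E(4)<\infty$) yields $E(R)\lesssim(\log R)^{\frac{3s}{s-3}\beta}$, as claimed. I expect the only real obstacle to be the bookkeeping --- tracking the many exponents of Lemma~\ref{lem:mian est} through the two Young's inequalities so that the power of $\log R$ lands exactly on $\tfrac{3s}{s-3}\beta$; the one conceptual subtlety is that the $\theta\,E(R)$ term cannot be absorbed on the left (it sits on a larger ball than $E(\rho)$), which is exactly why the dyadic iteration with a geometrically small weight is needed.
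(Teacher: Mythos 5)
Your proof is correct and follows essentially the same route as the paper: bound the annulus norm by $E(R)^{1/2}$, apply Young's inequality (where $s>3$ makes the relevant exponent $\frac{18}{s+6}<2$, with conjugate $\frac{s+6}{s-3}$ producing the $(\log R)^{\frac{3s}{s-3}\beta}$ power), and absorb the small multiple of $E$ on the larger ball by a dyadic iteration. If anything, you are more careful than the paper's write-up, which displays the iterated sum as $\sum_i 2^{-i}$ while suppressing the factor $2^{(i+1)\gamma}$ coming from $(t_{i+1}-t_i)^{-\gamma}$ and therefore implicitly relies on exactly your choice of a Young parameter $\theta$ with $\theta\,2^{\gamma}<\tfrac12$.
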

    \begin{proof}
    	We may assume that $\|\left(\nabla u, \nabla B\right)\|_{L^2(B_R)}>1$ when $R>R'$ for some positive $R'$. Fix $R'<\rho<R$. By Lemma \ref{lem:mian est} and Young inequality, we have
    	\begin{equation}\begin{aligned} \label{eq:s>3}
    			\|\left(\nabla u, \nabla B\right)\|_{L^2(B_\rho)}^2 & \lesssim R^{10}(R-\rho)^{-10}(\log R)^{{\frac{3s}{s+6}\beta}}\|\left(\nabla u, \nabla B\right)\|_{L^2(B_R)}^{\frac{18}{s+6}} \\
    			& \leq\frac{1}{2}\|\left(\nabla u, \nabla B\right)\|_{L^2(B_R)}^2+C R^{\frac{10(s+6)}{s-3}}(R-\rho)^{-\frac{10(s+6)}{s-3}} (\log R)^{\frac{3s}{s-3}\beta},
    	\end{aligned}\end{equation}
    	where we note that $s>3$ ensures the validity of Young inequality. Denote $t_0=\rho$, $t_{i+1}-t_i=2^{-i-1}(R-\rho)$. Using above inequality with $\rho, R$ replaced by $t_i, t_{i+1}$, and iterating $k$ times, we have
    	$$
    	\begin{aligned}
    		\|\left(\nabla u, \nabla B\right)\|_{L^2(B_{t_0})}^2 & \leq\frac{1}{2^k}\|\left(\nabla u, \nabla B\right)\|_{L^2(B_{t_k})}^2+C  R^{\frac{10(s+6)}{s-3}}(R-\rho)^{-\frac{10(s+6)}{s-3}} (\log R)^{\frac{3s}{s-3}\beta} \sum_{i=0}^{k-1}\frac{1}{2^i}.
    	\end{aligned}
    	$$
    	Letting $k \rightarrow \infty$ and choosing $\rho=R / 2$, we complete the proof.
    \end{proof}

	\section{Proof of Theorem \ref{thm:main}} \label{sec3}
	
	\begin{proof}[Proof of Theorem \ref{thm:main}]
		We define
		$$
		g(R):=\int_{\mathbb{R}^3} \left(|\nabla u|^2 + |\nabla B|^2\right) \varphi_R ~\mathrm{d}x,
		$$
		which implies
		$$
		g^{\prime}(R)=\int_{B_{2 R} \backslash B_R}\left(|\nabla u|^2 + |\nabla B|^2\right) \eta^{\prime}\left(\frac{|x|}{R}\right) \cdot\left(-\frac{|x|}{R^2}\right) ~\mathrm{d}x \gtrsim R^{-1} \int_{B_{2 R} \backslash B_R}\left(|\nabla u|^2 + |\nabla B|^2\right)~\mathrm{d}x.
		$$
		Replacing $\rho, R$ by $R, 3 R$ and choosing $\eta_2=\varphi_R$ in Lemma \ref{lem:mian est}, we have
		\begin{equation}\begin{aligned}
				\label{eq:est of gr}
				g(R) \lesssim R^{\frac{1}{2}}(\log R)^{\frac{3s}{s+6}\beta} g^{\prime}(R)^{\frac{1}{2}}\left(g(3 R)^{\frac{12-s}{2(s+6)}}+1\right).
		\end{aligned}\end{equation}
		Recall that 
		\begin{equation*}\begin{aligned} \label{eq:est of nabla u}
				\int_{B_R}|\nabla u|^2 + |\nabla B|^2 ~\mathrm{d}x \lesssim (\log R)^{\frac{3s}{s-3}\beta},
		\end{aligned}\end{equation*}
	    for all $R>2$ in Lemma \ref{lem:grow est}. Then by the definition of $g(R)$, we have
		\begin{equation}\begin{aligned} \label{eq:est of gR}
				g(R) \lesssim (\log R)^{\frac{3s}{s-3}\beta},
		\end{aligned}\end{equation}
		for all $R>2$.
		We then prove $\int_{\mathbb{R}^3} |\nabla u|^2 + |\nabla B|^2~\mathrm{d}x = 0$ by contradiction. Assuming $\int_{\mathbb{R}^3} |\nabla u|^2 + |\nabla B|^2~\mathrm{d}x\neq0$, we divide the proof into two cases.
		
		\underline{\bf Case I. $0\neq\int_{\mathbb{R}^3} |\nabla u|^2 + |\nabla B|^2~\mathrm{d}x\leq2$}. There exists $R_0>e$, such that $g(R)>0$ for all $R \geq R_0$. Then by \eqref{eq:est of gr}, choosing $\beta = \frac{s+6}{6 s}$, we directly obtain
		\begin{equation*}\begin{aligned}
				g(R) \lesssim R^{\frac{1}{2}}(\log R)^{\frac12} g^{\prime}(R)^{\frac{1}{2}},
		\end{aligned}\end{equation*}
		which implies
		\begin{equation*}\begin{aligned}
				\int_{R_0}^{+\infty} \frac{~\mathrm{d}R}{R\log R}  \lesssim \int_{R_0}^{+\infty} \frac{g'(R)}{g(R)^2} ~\mathrm{d}R.
		\end{aligned}\end{equation*}
		We derive a contradiction from the divergence of the left-hand side and the convergence of the right-hand side of the above inequality.
		
		\underline{\bf Case II. $\int_{\mathbb{R}^3} |\nabla u|^2 + |\nabla B|^2 ~\mathrm{d}x>2$}. There exists $R_1>9$, such that $g(3R)\geq g(R)>1$ for all $R > R_1$. Then by \eqref{eq:est of gr}, choosing $\beta = \frac{s+6}{6 s}$, we obtain
		\begin{equation*}\begin{aligned}
				g(R) \lesssim R^{\frac{1}{2}}(\log R)^{\frac12} g^{\prime}(R)^{\frac{1}{2}}g(3 R)^{\frac{12-s}{2(s+6)}},
		\end{aligned}\end{equation*}
		which implies
		\begin{equation*}\begin{aligned}
				\frac{\left(\frac{g(R)}{g(3 R)}\right)^{\frac{12-s}{s+6}}}{R\log R}  \lesssim \frac{{g}^{\prime}({R})}{{g}({R})^{\frac{3 {s}}{{s}+6}}}.
		\end{aligned}\end{equation*}
		Integrating both sides on $(R_1, r)$, we get
		\begin{equation}\begin{aligned} \label{eq:main int}
				\int_{R_1}^{r}\frac{\left(\frac{g(R)}{g(3 R)}\right)^{\frac{12-s}{s+6}}}{R\log R}  ~\mathrm{d}R \lesssim \int_{R_1}^{r} \frac{g'(R)}{{g}({R})^{\frac{3 {s}}{{s}+6}}} ~\mathrm{d}R.
		\end{aligned}\end{equation}
		For the left side of \eqref{eq:main int}, we note that
		\begin{equation}\begin{aligned} \label{eq:leftside}
				&\quad\int_{R_1}^{r}\frac{\left(\frac{g(R)}{g(3 R)}\right)^{\frac{12-s}{s+6}}}{R\log R}  ~\mathrm{d}R 
				\gtrsim \int_{R_1}^{r}\frac{\log\frac{g(R)}{g(3R)}+1}{R\log R}  ~\mathrm{d}R \\
				&=\int_{R_1}^{r} \frac{\log g(R)}{R\log R} ~\mathrm{d}R  - \int_{3R_1}^{3r} \frac{\log g(R)}{R\left(\log R - \log3\right)} ~\mathrm{d}R  + \log\log r - \log\log R_1 \\
				&=\int_{R_1}^{r} \frac{\log g(R)}{R}\left(\frac{1}{\log R} - \frac{1}{\log R - \log3}\right) ~\mathrm{d}R -\int_{r}^{3r} \frac{\log g(R)}{R\left(\log R -\log3\right)}~\mathrm{d}R \\
				&\quad+ \int_{R_1}^{3R_1} \frac{\log g(R)}{R\left(\log R-\log3\right)} ~\mathrm{d}R   - \log\log R_1 + \log\log r\\
				&= I_a + I_b +I_c + \log\log r,
		\end{aligned}\end{equation}
		where $$I_a := \int_{R_1}^{r} \frac{\log g(R)}{R}\left(\frac{1}{\log R} - \frac{1}{\log R - \log3}\right) ~\mathrm{d}R,\quad I_b:=-\int_{r}^{3r} \frac{\log g(R)}{R\left(\log R -\log3\right)}~\mathrm{d}R $$ and $$I_c:=\int_{R_1}^{3R_1} \frac{\log g(R)}{R\left(\log R-\log3\right)} ~\mathrm{d}R   - \log\log R_1.$$
		Noting $R_1>9$ implies $\log R - \log3 >\frac12 \log R$ for all $R>R_1$ and using \eqref{eq:est of gR}, we have for $I_a$:
		\begin{equation}\begin{aligned} \label{eq:Ia}
				|I_a| \lesssim \int_{R_1}^{r} \frac{\log g(R)}{R(\log R)^2} ~\mathrm{d}R \lesssim \int_{R_1}^{r} \frac{\log \log R}{R(\log R)^2} ~\mathrm{d}R,
		\end{aligned}\end{equation}
		for $I_b$:
		\begin{equation}\begin{aligned} \label{eq:Ib}
				|I_b| &\lesssim \int_{r}^{3r} \frac{\log \log R}{R\log R}  ~\mathrm{d}R = \left.\frac12\left(\log\log R\right)^2 \right|_r^{3r} = \frac12\left(\log\log 3r\right)^2 - \frac12\left(\log\log r\right)^2 \\
				&= \frac12\left(\log\log r + \log\left(1+\frac{\log3}{\log r}\right)\right)^2 -\frac12\left(\log\log r\right)^2\\
				&\lesssim 1-\frac{1}{\log r}+ \left(\log\left(1+\frac{\log3}{\log r}\right)\right)^2.
		\end{aligned}\end{equation}
		By\eqref{eq:Ia} and \eqref{eq:Ib}, we have
		\begin{equation}\begin{aligned} \label{eq:IaIbIc}
				\lim\limits_{r\rightarrow+\infty} I_a + I_b +I_c \gtrsim -1.
		\end{aligned}\end{equation}
		Letting $r\rightarrow+\infty$ in \eqref{eq:main int} and combining \eqref{eq:leftside}, \eqref{eq:IaIbIc}, we derive a contradiction from the divergence of the left-hand side and the convergence of the right-hand side of \eqref{eq:main int}. Hence
		$$
		\int_{\mathbb{R}^3}|\nabla u|^2 + |\nabla B|^2 ~\mathrm{d}x =0,
		$$
		which implies $u\equiv u_0$ and $B\equiv B_0$ for all $x\in\mathbb{R}^3$, where ${u}_0$ and $B_0$  are two constants. By Lemma \ref{lem:est of 2-norm} with $\rho=\frac{R}{2}$, we have
		$$
		\|u\|_{L^2(B_{\frac{R}{2}})} + \|B\|_{L^2(B_{\frac{R}{2}})}   \lesssim R^{\frac{5 s-6}{6 s}}(\log R)^{\frac{s+6}{6s}},
		$$
		 which implies
		\begin{equation}\begin{aligned}
				|u_0| + |B_0| \lesssim R^{-\frac{2s+3}{3 s}}(\log R)^{\frac{s+6}{6s}},
		\end{aligned}\end{equation}
		for all $R>2$. Therefore, $u=B\equiv0$. The proof is completed.
	\end{proof}
	\begin{Rem}
It is evident that $0 < \frac{g(R)}{g(3R)} \leq 1$, but this is insufficient for estimating its lower bound. However, the result of Lemma \ref{lem:grow est} indicates that the growth rate of $g(R)$ is well-controlled, suggesting that its value might be increased through integration using the Froullani integral.
	\end{Rem}

	\noindent {\bf Acknowledgments.}
	The authors would like to thank Professors Wendong Wang for some helpful communications and the support from the Key R\&D Program Project No. 2023YFA1009200.
	\\
	
	\noindent {\bf Declaration of competing interest.}
	The authors state that there is no conflict of interest.\\
	
	\noindent {\bf Data availability.}
	No data was used in this paper.

	\bibliographystyle{amsplain}
	\bibliography{SV_MHD.bib}
\end{document}